\documentclass[12pt,amscd, amssymb]{amsart}
\usepackage{amsmath,amssymb,mathtools}
\usepackage{enumitem}
\usepackage{microtype}
\usepackage{hyperref}

\newtheorem{theorem}{Theorem}[section]
\newtheorem{proposition}[theorem]{Proposition}
\newtheorem{lemma}[theorem]{Lemma}
\newtheorem{corollary}[theorem]{Corollary}
\theoremstyle{remark}
\newtheorem{remark}[theorem]{Remark}

\newtheorem{example}[theorem]{Example}
\DeclareMathOperator{\ar}{ar}
\DeclareMathOperator{\gr}{gr}

\DeclareMathOperator{\rt}{rt}

\newcommand{\Jbar}{\overline J}
\newcommand{\Rbar}{\overline R}
\newcommand{\QNVT}{\mathrm{QNVT}}
\newcommand{\rev}{\mathrm{rev}}
\newcommand{\Rees}{\mathfrak{R}}

\def\reg{\operatorname{reg}}

\def\mm{{\frak m}}

\def\gr{{\mathrm{gr}}}
\def\ar{{\mathrm{ar}}}

\def\reg{\operatorname{reg}}

\def\dim{\operatorname{dim}}

\def\max{\operatorname{max}}
\newcommand{\inm}{\operatorname{in}}

\begin{document}

\title[A sharper perturbation bound]{An Improved Upper Bound for the Perturbation Index}
\author{Ton That Quoc Tan}
\address{Department of Mathematics, FPT University, Da Nang, Vietnam}
\email{tanttq@fe.edu.vn, quoctanmath@gmail.com}
\subjclass[2020]{Primary 13A30, 13D45; Secondary 13H15, 14B12}
\keywords{small perturbation, filter regular sequence, initial ideal, associated graded ring, Artin-Rees number}

\begin{abstract}
Let $(R,\mathfrak m)$ be a Noetherian local ring and $J$ be an arbitrary ideal of $R$. Suppose that $f_1,\ldots,f_r$ is a $J$-filter regular sequence in $R$ and $I=(f_1,\ldots,f_r)$. In this paper, we establish an improved upper bound for the perturbation index of the associated graded ring $\gr_J(R/I)$, refining the bound obtained by Quy and N. V. Trung.
\end{abstract}

\maketitle

\section{Introduction}

Let $(R,\mm)$ be a Noetherian local ring and let $I,J\subseteq R$ be ideals. Suppose that $I=(f_1,\ldots,f_r)$. An ideal
$I'=(f_1+\varepsilon_1,\ldots,f_r+\varepsilon_r)$, where $\varepsilon_i\in J^N$ for $i=1,\ldots,r$ and $N\gg0$, is said to be a \emph{$J$-adic perturbation} of $I$. If $J=\mm$, we simply call $I'$ a \emph{small perturbation} of $I$.
Set
$$
\gr_J(R/I)=\bigoplus_{n\ge0}\frac{J^n+I}{J^{n+1}+I},
$$
the associated graded ring of $R/I$ with respect to the ideal $(J+I)/I$. A natural perturbation problem asks when
$
\gr_J(R/I')\cong \gr_J(R/I)
$
for all sufficiently high $J$-adic perturbations $I'$ of $I$. The least integer \(N\) such that
$ \operatorname{gr}_J(R/I')\cong \operatorname{gr}_J(R/I) $
for every perturbation
$ I'=(f_1+\varepsilon_1,\ldots,f_r+\varepsilon_r) \quad\text{with }\varepsilon_i\in J^N $
is called the \emph{perturbation index} of \(\operatorname{gr}_J(R/I)\).

When $J$ is $\mm$-primary, this problem is closely related to the invariance of Hilbert-Samuel functions under small perturbations. The subject goes back to Eisenbud's work on adic approximation of complexes \cite{Eis}. Srinivas and Trivedi \cite{ST} studied the stability of Hilbert-Samuel functions and formulated a conjecture in the filter-regular case, which was later solved by Ma, Quy and Smirnov \cite{MQS}. For generalized Cohen-Macaulay local rings, Quy and V. D. Trung \cite{QVDT} obtained a linear upper bound in terms of the homological degree for the perturbation index preserving the Hilbert function. In a dual direction, Tan \cite{TanBKMS} proved the stability of the Hilbert--Samuel function relative to an Artinian module under perturbations generated by a coregular sequence. On the homological side, Duarte \cite{Duarte} studied Betti numbers under small perturbations, and more recently V. D. Trung \cite{VDT} established the stability of both Bass and Betti numbers under ideal perturbations.

For an arbitrary ideal $J$ of $R$, Quy and N. V. Trung \cite{QT} proved that if $f_1,\ldots,f_r$ is a $J$-filter regular sequence then sufficiently small $J$-adic perturbations preserve the initial ideal $\inm(I)$ and therefore preserve $\gr_J(R/I)$.  They obtained the explicit bound
\begin{equation}\label{eq:intro-QT-bound}
 N_{\QNVT}
 =\max\left\{
 a_1+2a_2+4a_3+\cdots+2^{r-1}a_r,
 A_1,\ldots,A_r
 \right\}+1,
\end{equation}
where
$$
 a_i=
 a_J\!\left(
 \frac{(f_1,\ldots,f_{i-1}):f_i}{(f_1,\ldots,f_{i-1})}
 \right) \quad \text{and} \quad  A_i=\ar_J(f_1,\ldots,f_i)
$$
for $i=1\ldots r$. 


The purpose of this paper is to show that one full doubling can be avoided.  The main result is the following.

\begin{theorem}[Main theorem]\label{thm:intro-main}
Let $J$ be an arbitrary ideal of a Noetherian local ring $(R,\mm)$ and let
$f_1,\ldots,f_r$ be a $J$-filter regular sequence.  For $i=1,\ldots,r$, let $$a_i =  a_J\!\left(
 \frac{(f_1,\ldots,f_{i-1}):f_i}{(f_1,\ldots,f_{i-1})}
 \right) \quad \text{and} \quad A_i=\ar_J(f_1,\ldots,f_i). $$ 
 Set
\begin{equation}\label{eq:intro-new-bound}
 N
 =\max\left\{
 a_1+a_2+2a_3+4a_4+\cdots+2^{r-2}a_r,
 A_1,\ldots,A_r
 \right\}+1.
\end{equation}
For all $\varepsilon_1,\ldots,\varepsilon_r\in J^{N}$ and $f_i'=f_i+\varepsilon_i$ for $i=1,\ldots,r$. Then
\begin{enumerate}[label=\textup{(\roman*)}]
\item $f_1',\ldots,f_r'$ is a $J$-filter regular sequence with  $$a_J\left(\dfrac{(f_1',\ldots,f_{i-1}'):f_i')}{(f_1',\ldots,f_{i-1}')}\right) \leq 2^{i-1}a_i$$ for $i=1,\ldots,r$.
\item  For  $j=1,\ldots,r$, we have
$
 \inm(f_1',\ldots,f_j'))=\inm(f_1,\ldots,f_j).$

\item In particular, 
$
 \gr_J(R/I')\cong\gr_J(R/I)$ and  $\ar_J(I')=\ar_J(I).
$
\end{enumerate}
\end{theorem}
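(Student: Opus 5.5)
Our strategy is to reproduce the inductive scheme of Quy and N. V. Trung \cite{QT}, but to restructure the first step so that the bound $2^{i-1}a_i$ is charged one index later. We argue by induction on $r$ and prove (i) and (ii) together. The case $r=1$ is direct: $f_1$ is $J$-filter regular, and $a_1=a_J(0:f_1)$. For $x\in 0:f_1'$ we have $xf_1=-x\varepsilon_1\in J^N$. Since $N>A_1=\ar_J(f_1)$, Artin--Rees gives $xf_1\in J^{N-A_1}f_1$. Hence $x\in J^{N-A_1}+(0:f_1)$, and as $(0:f_1)\cap J^{n}=0$ for $n>a_1$, the usual filter-regular argument yields $(0:f_1')\cap J^{n}=0$ for $n>a_1$. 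This is the claimed bound $2^0a_1$.

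For the lifting $\inm(f_1')=\inm(f_1)$, we only need the leading form of $f_1'$ in the appropriate degree to agree with that of $f_1$ modulo $\inm$. The key observation is that for $i=1$ the perturbation bound needs only $a_1$ and $A_1$, not $2a_1$. This is exactly the saving in the formula $a_1+a_2+2a_3+\cdots+2^{r-2}a_r$.

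For the inductive step, assume (i) and (ii) hold for $f_1',\dots,f_{i-1}'$. Write $Q=(f_1,\dots,f_{i-1})$ and $Q'=(f_1',\dots,f_{i-1}')$. By (ii) for $j=i-1$, we have $\inm(Q')=\inm(Q)$, so $\gr_J(R/Q')\cong\gr_J(R/Q)$, and also $\ar_J(Q')=\ar_J(Q)$ and $A_{i-1}$ transfer. We now work in $\bar R'=R/Q'$, where $f_i'$ is a perturbation of the image of $f_i$. Note that $f_i$ in $R/Q'$ differs from $f_i$ in $R/Q$, because $Q\neq Q'$. To compare them we use that each element of $Q$ differs from an element of $Q'$ by something in $J^{N}Q$-type terms.

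The plan for bounding $a_J((Q':f_i')/Q')$ is as follows. Take $x$ with $xf_i'\in Q'$. Rewrite this as $xf_i\in Q+J^{N}(\cdots)$ and use $A_i$ to push the error into $J^{N-A_i}(f_1,\dots,f_i)$. Then conclude that $x$ lies in $(Q:f_i)+J^{N-A_i-\text{shift}}+Q'$-corrections. The shift accumulates from the $a$-invariants of the previous quotients, which by induction are at most $2^{j-1}a_j$. Summing these gives $a_i+\sum_{j<i}2^{j-1}a_j\le 2^{i-1}a_i$-type estimates, together with the threshold requirement $N>a_1+a_2+2a_3+\cdots$.

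Part (ii) then follows from the standard fact, used in \cite{QT}, that if $\inm(Q')=\inm(Q)$ and $f_i'\equiv f_i$ modulo $J^N$ with $N$ beyond the relevant $a$- and Artin--Rees numbers, then $\inm(Q'+f_i')=\inm(Q+f_i)$. Part (iii) is immediate from (ii) with $j=r$, since $\gr_J(R/I)$ and $\ar_J(I)$ are determined by $\inm(I)$.

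The main obstacle is the bookkeeping in the inductive step. We must verify that the hypothesis on $N$ is exactly what is required at stage $i$, namely that $N$ exceeds $\sum_{j\le i}$ of the $a$-invariants of the perturbed sequence up to stage $i-1$ plus $a_i$. The bound $a_J\le 2^{j-1}a_j$ for perturbed invariants supplies, for $j\ge2$, the coefficient $2^{j-2}$ attached to $a_j$ in the threshold, one doubling lower than in $N_{\QNVT}$. We would first try to prove the sharper intermediate estimate directly, namely that the perturbed $a$-invariant at stage $i$ is at most $a_i+\sum_{j<i}a_J(\text{perturbed stage } j)$. This estimate recursively yields $2^{i-1}a_i$ when $a_j\le a_i$ is arranged, or more generally the stated sum.
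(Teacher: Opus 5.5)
\textbf{Verdict: the proposal has a genuine gap.} The inductive step, which carries the whole content of the theorem, is only sketched, and the forward order you set up cannot reach the stated $N$.

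\emph{The saving is misattributed.} $a_1$ already has coefficient $1$ in $N_{\QNVT}$. What the new bound removes is one factor of $2$ on every $a_i$ with $i\ge 2$.

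\emph{The forward bookkeeping reproduces the old pattern.} In your order, $f_i$ is perturbed modulo the already perturbed prefix $Q'=(f_1',\ldots,f_{i-1}')$. The only control you have on $a_J\bigl((Q':f_i)/Q'\bigr)$ and on the prefix invariants is the doubled bound $2^{j-1}a_j$. Fed into a sum-type threshold, this gives $a_j$ the coefficient $2^{j-1}$, not $2^{j-2}$ as you assert, so it reproduces the Quy--Trung pattern. Already $r=2$ with $0:f_1=0$ shows the problem:
\begin{itemize}
\item after perturbing $f_1$ you only know $a_J\bigl(((f_1'):f_2)/(f_1')\bigr)\le 2a_2$;
\item so perturbing $f_2$ in $R/(f_1')$ via Lemma~\ref{lem:one} requires $N\ge 2a_2$;
\item but the theorem only provides $N=\max\{a_2,A_1,A_2\}+1$, which is smaller once $a_2\ge 2$ and the $A_i$ are small.
\end{itemize}

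\emph{The fallback estimate does not help.} Your bound $b_i\le a_i+\sum_{j<i}b_j$ is unproved. Even granted, it unrolls to $b_i\le a_i+a_{i-1}+2a_{i-2}+\cdots+2^{i-2}a_1$. This is not $\le 2^{i-1}a_i$ when $a_1$ is large, and you cannot arrange $a_j\le a_i$.

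\emph{The fact used for (ii) is not available.} The claim that $\inm(Q')=\inm(Q)$ and $f_i'\equiv f_i$ mod $J^N$ give $\inm(Q',f_i')=\inm(Q,f_i)$ is not in \cite{QT}. The isomorphism $\gr_J(R/Q')\cong\gr_J(R/Q)$ is not induced by any map that sends the image of $f_i$ to that of $f_i'$. This is exactly the difficulty you flagged and then skipped.

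\emph{The base case is muddled.} You conflate $J^{a_1}(0:f_1)=0$ with $(0:f_1)\cap J^n=0$. The correct statement is $0:f_1'=0:f_1$ (Lemma~\ref{lem:one}).

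\textbf{The missing idea is the order of perturbation.} Perturb $f_r,f_{r-1},\ldots,f_2$ first, keeping the prefix fixed at each stage.
\begin{itemize}
\item When $f_k$ is perturbed in $S=R/(f_1,\ldots,f_{k-1})$, its own Loewy length is exactly $a_k$. The already perturbed tail $f_{k+1}',\ldots,f_r'$ has Loewy lengths at most $a_{k+1},2a_{k+2},\ldots,2^{r-k-1}a_r$. So this step costs $a_k+a_{k+1}+2a_{k+2}+\cdots+2^{r-k-1}a_r$.
\item Only at the end is $f_1$ perturbed, at cost $a_1+a_2+2a_3+\cdots+2^{r-2}a_r$. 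This last step is what doubles the tail to $2^{i-1}a_i$.
\end{itemize}

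Each step relies on a first-entry lemma (Lemma~\ref{lem:first-entry}):
\begin{itemize}
\item Using the switch isomorphism of Lemma~\ref{lem:switch} inductively, one shows $a_J\bigl(((f_2,\ldots,f_i):f_1)/(f_2,\ldots,f_i)\bigr)\le a_1+\cdots+a_i$.
\item Hence in $R_i=R/(f_2,\ldots,f_i)$ the pair $f_1,f_{i+1}$ is $J$-filter regular, and Proposition~\ref{prop:pair} applies there.
\end{itemize}
This perturbs one element at a time inside a fixed quotient ring, which is how both the factor $2$ and the equalities of initial ideals are obtained. The Artin--Rees numbers of the partially perturbed ideals are controlled by Lemmas~\ref{lem:ARquot} and~\ref{lem:ARin}.
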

The idea is elementary but order-sensitive.  We first perturb
\[
 f_r,f_{r-1},\ldots,f_2
\]
in this order while keeping $f_1$ fixed.  At the stage where $f_k$ is perturbed, the prefix $f_1,\ldots,f_{k-1}$ is still unchanged, so

$$ a_J\!\left(\frac{(f_1,\ldots,f_{k-1}):f_k}{(f_1,\ldots,f_{k-1})}\right)=a_k. $$
 This yields the coefficient pattern
\[
 1,1,2,4,\ldots,2^{r-2}
\]
in the sufficient perturbation bound.

If we write
\[
 B_{\QNVT}=a_1+2a_2+\cdots+2^{r-1}a_r,
 \qquad
 B_{\rev}=a_1+a_2+2a_3+\cdots+2^{r-2}a_r,
\]
then
\begin{equation}\label{eq:exact-comparison}
 B_{\QNVT}=2B_{\rev}-a_1.
\end{equation}
Thus the new weighted term is exactly $(B_{\QNVT}+a_1)/2$.  The improvement may of course be hidden if one of the Artin--Rees numbers $A_i$ dominates both weighted terms.  When $a_1=\cdots=a_r=1$ and the Artin--Rees terms vanish, the two sufficient exponents are respectively
\[
 2^r \qquad\text{and}\qquad 2^{r-1}+1.
\]
We construct such an example in \ref{example}.

The paper is organized as follows.  Section~2 recalls the basic facts from \cite{QT}.  Section~3 performs the reverse-tail induction and proves Theorem~\ref{thm:intro-main}.  Section~\ref{sec:applications} discusses several consequences and applications of the improved perturbation bound.
\section{Preliminaries}\label{sec:prelim}

Throughout this paper $(R,\mm)$ is a Noetherian local ring and $J\subseteq R$ is an arbitrary ideal.  For a finitely generated $R$-module $M$, set
\[
 a_J(M)=\inf\{n\ge0\mid J^nM=0\},
\]
with the convention $a_J(M)=\infty$ if no such $n$ exists. We call $a_J(M)$ the \emph{$J$-Loewy length} of $M$. 

A sequence $f_1,\ldots,f_r$ in $R$ is called \emph{$J$-filter regular} if for every $i =1, \ldots, r$,
$f_i$ avoids every associated prime of $R/(f_1,\ldots,f_{i-1})$ which does not contain $J$.  Equivalently, for $i=1, \ldots, r$
\[
 a_J\!\left(
 \frac{(f_1,\ldots,f_{i-1}):f_i}{(f_1,\ldots,f_{i-1})}
 \right)<\infty.
\]

For an ideal $I\subseteq R$, let $\inm(I)$ denote its \emph{initial ideal} of $I$ in $\gr_J(R)$.  We use the convention of \cite{QT}; in particular,
\[
 \gr_J(R/I)\cong \gr_J(R)/\inm(I).
\]
The \emph{Artin--Rees number of $I$ with respect to $J$}, denoted $\ar_J(I)$, is the least integer $c$ such that
\[
 J^{n+1}\cap I=J(J^n\cap I)
 \qquad\text{for all }n\ge c.
\]
By \cite[Proposition 2.3]{QT},
\begin{equation}\label{eq:AR-initial}
 \ar_J(I)=d(\inm(I)),
\end{equation}
where $d(\inm(I))$ is the maximal degree of a minimal homogeneous generating set of $\inm(I)$.

We collect the precise results from \cite{QT} used below.

\begin{lemma}\label{lem:one}
Let $(R,\mm)$ be a Noetherian local ring and let $f\in R$ be a $J$-filter regular element.  Set
\[
 c=\max\{a_J(0:f),\ar_J(f)+1\}.
\]
If $f'=f+\varepsilon$ with $\varepsilon\in J^c$, then
\[
 0:f'=0:f,
 \qquad
 \inm(f')=\inm(f).
\]
Consequently $f'$ is $J$-filter regular and
$a_J(0:f')=a_J(0:f)$.
\end{lemma}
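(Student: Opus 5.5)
We prove the two equalities directly, using only the definitions of $a_J$, of $\ar_J$, and Krull's intersection theorem. We may assume $J\subseteq\mm$, since otherwise $J=R$, both $\gr_J$ rings vanish and there is nothing to prove. Write $a=a_J(0:f)$ and $\rho=\ar_J(f)$. Then $c\ge a$, $c\ge \rho+1$, $J^c(0:f)=0$, and $J^n\cap(f)=J^{n-\rho}\bigl(J^\rho\cap(f)\bigr)\subseteq J^{n-\rho}f$ for all $n\ge\rho$.

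The key tool is the following decomposition. Suppose $xf\in J^n$ with $n\ge \rho$. Then $xf=gf$ for some $g\in J^{n-\rho}$, so $x=g+z$ with $z\in 0:f$. Since $z\varepsilon\in J^c(0:f)=0$, we get
\[
x\varepsilon=g\varepsilon\in J^{n-\rho+c}\subseteq J^{n+1}.
\]
If instead $n<\rho$, then trivially $x\varepsilon\in J^c\subseteq J^{\rho+1}\subseteq J^{n+1}$. So in all cases:
\[
(\ast)\qquad xf\in J^n\ \Longrightarrow\ x\varepsilon\in J^{n+1},\ \text{hence } xf'\in J^n \text{ and } xf'\equiv xf \pmod{J^{n+1}}.
\]

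\emph{Colon ideals.} If $xf=0$, then $x\varepsilon\in J^c(0:f)=0$, so $xf'=0$. Conversely, suppose $xf'=0$. Then $xf=-x\varepsilon\in J^c$. Apply $(\ast)$ inductively: if $xf\in J^n$ then $x\varepsilon\in J^{n+1}$, so $xf=-x\varepsilon\in J^{n+1}$. Hence $xf\in\bigcap_n J^n=0$ by Krull's intersection theorem. Thus $0:f'=0:f$, and consequently $a_J(0:f')=a_J(0:f)<\infty$, so $f'$ is $J$-filter regular.

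\emph{Initial ideals.} It suffices to prove that for every $n$,
\[
\bigl((f)\cap J^n\bigr)+J^{n+1}=\bigl((f')\cap J^n\bigr)+J^{n+1}.
\]
The inclusion $\subseteq$ is exactly $(\ast)$. For $\supseteq$, let $bf'\in J^n$.
\begin{itemize}
\item If $bf=0$, then $b\in 0:f$, so $b\varepsilon=0$ and $bf'=0$.
\item Otherwise let $m$ be the largest integer with $bf\in J^m$; it exists by Krull's theorem. If $m<n$, then by $(\ast)$ we have $bf\equiv bf'\equiv 0\pmod{J^{m+1}}$, so $bf\in J^{m+1}$, contradicting the choice of $m$. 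Hence $m\ge n$, and $(\ast)$ with $n$ in place of $m$ gives $bf'\equiv bf \pmod{J^{n+1}}$.
\end{itemize}
This gives $\inm(f')=\inm(f)$.

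The only delicate point is the reverse inclusion for initial ideals. We handle it by the maximal-order argument above rather than by trying to control $\ar_J(f')$ directly, since no bound on $\ar_J(f')$ is available a priori.
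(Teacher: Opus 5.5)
Your proof is correct. It does not follow the paper, because the paper gives no argument here: it only cites \cite[Proposition 3.2]{QT}. You instead prove the lemma from scratch, and the engine is your implication $(\ast)$. The hypothesis $c\ge \ar_J(f)+1$ lets you rewrite any $xf\in J^n$ as $gf$ with $g\in J^{n-\ar_J(f)}$. The hypothesis $c\ge a_J(0:f)$ kills the correction term $(x-g)\varepsilon$, so $x\varepsilon$ lands one degree higher. Krull's intersection theorem then turns $(\ast)$ into $0:f'=0:f$. Your maximal-order argument gives the reverse inclusion $((f')\cap J^n)+J^{n+1}\subseteq((f)\cap J^n)+J^{n+1}$ without any a priori bound on $\ar_J(f')$, which is where a naive symmetric argument would get stuck. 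Your version shows exactly which hypothesis is used where, and that nothing beyond the definitions and Krull's theorem is needed; the citation only buys brevity.

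One correction to your opening reduction: the case $J=R$ is not vacuous. There the initial-ideal claim is trivial, but the colon claim is false. If $f$ is a nonzerodivisor, then $a_J(0:f)=0$, $\ar_J(f)=0$ and $c=1$, so $\varepsilon=-f\in J^c$ is allowed; this gives $f'=0$ with $0:f'=R$. The lemma therefore tacitly assumes $J$ is proper, i.e.\ $J\subseteq\mm$, which is exactly what your uses of Krull's theorem (and the description of initial forms via orders) need. State this as a standing hypothesis rather than claiming the degenerate case is trivial.
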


\begin{proof}
This is \cite[Proposition 3.2]{QT}.
\end{proof}

\begin{lemma}\label{lem:switch}
For $u,v\in R$ there is an isomorphism
\[
 \frac{(u):v}{(u)+(0:v)}
 \cong
 \frac{(v):u}{(v)+(0:u)}.
\]
Equivalently, after quotienting by a common ideal $K$ of $R$,
\begin{equation}\label{eq:switchK}
 \frac{(K,u):v}{(K,u)+(K:v)}
 \cong
 \frac{(K,v):u}{(K,v)+(K:u)}.
\end{equation}
\end{lemma}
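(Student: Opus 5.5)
We will build an explicit isomorphism from the colon ideal $(u):v$ onto the second quotient and then check that it is well defined and bijective. The plan is to show that $x\in (u):v$ exactly when $xv=yu$ for some $y\in R$, and then to send $x$ to the class of $y$.

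\textbf{Step 1: the assignment is well defined.} Take $x\in(u):v$ and write $xv=yu$. Then $yu\in(v)$, so $y\in(v):u$.

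Suppose $y'$ is another choice, so $y'u=xv$ as well. Then $(y-y')u=0$, hence $y-y'\in 0:u$. Therefore the class of $y$ in
\[
\frac{(v):u}{(v)+(0:u)}
\]
depends only on $x$. This gives an $R$-linear map
\[
\phi\colon (u):v \longrightarrow \frac{(v):u}{(v)+(0:u)}.
\]
The map is surjective, since any $y\in(v):u$ satisfies $yu=xv$ for some $x$, and that $x$ lies in $(u):v$.

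\textbf{Step 2: the kernel.} If $x=ru$, then $xv=(rv)u$, so we may take $y=rv\in(v)$, and $\phi(x)=0$. If $x\in 0:v$, then $xv=0=0\cdot u$, so we may take $y=0$, and again $\phi(x)=0$. Hence $(u)+(0:v)\subseteq\ker\phi$.

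For the reverse inclusion, suppose $\phi(x)=0$, so $y=sv+t$ with $tu=0$. Then
\[
xv=yu=svu,
\]
so $(x-su)v=0$. This says $x-su\in 0:v$, and therefore $x\in(u)+(0:v)$.

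Thus $\ker\phi=(u)+(0:v)$, and $\phi$ induces the stated isomorphism.

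\textbf{Step 3: the relative version.} Apply the statement just proved in the ring $\bar R=R/K$ to the images $\bar u,\bar v$. The only thing to check is the dictionary between ideals of $\bar R$ and ideals of $R$ containing $K$:
\[
(\bar u):_{\bar R}\bar v \;\leftrightarrow\; (K,u):v,\qquad
0:_{\bar R}\bar v \;\leftrightarrow\; K:v,\qquad
(\bar u) \;\leftrightarrow\; (K,u).
\]
Substituting these into the isomorphism of Step 2 gives \eqref{eq:switchK}.

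There is no real obstacle in this argument. The only delicate point is the well-definedness of $\phi$ modulo $0:u$ together with the kernel computation in Step 2, which we handled directly.
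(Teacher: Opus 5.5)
Your proof is correct and follows the same route as the paper, which cites Quy--Trung's Lemma 3.3 for the absolute isomorphism and then obtains the relative version by passing to $R/K$, exactly as in your Step 3. Your explicit map $x\mapsto[y]$ (where $xv=yu$), with its well-definedness and kernel computation, simply supplies the verification that the citation leaves out.
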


\begin{proof}
The first statement is \cite[Lemma 3.3]{QT}.  The second is the same result applied in $R/K$.
\end{proof}

\begin{lemma}\label{lem:ARquot}
Let $K\subseteq I$ and put $\Rbar=R/K$ and $\Jbar=(J+K)/K$.  Then
\[
 \ar_{\Jbar}(I/K)\le \ar_J(I).
\]
\end{lemma}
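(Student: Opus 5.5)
The plan is to prove $J(J^n\cap (I/K))\supseteq J^{n+1}\cap (I/K)$ in $\Rbar$ for all $n\ge c:=\ar_J(I)$ by lifting to $R$, since the reverse inclusion always holds. Here powers of $\Jbar$ are images: $\Jbar^n=(J^n+K)/K$. So an element of $\Jbar^{n+1}\cap (I/K)$ is the class of some $x\in (J^{n+1}+K)\cap I$.

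Write $x=y+k$ with $y\in J^{n+1}$ and $k\in K$. Because $K\subseteq I$, we have $y=x-k\in I$, and hence $y\in J^{n+1}\cap I$. For $n\ge c$ the Artin--Rees condition gives $y\in J(J^n\cap I)$, so $y=\sum_t j_t z_t$ with $j_t\in J$ and $z_t\in J^n\cap I$. Passing to $\Rbar$, the class $\bar x=\bar y=\sum \bar j_t\bar z_t$ has $\bar j_t\in\Jbar$ and $\bar z_t\in \Jbar^n\cap (I/K)$. Thus $\bar x\in \Jbar(\Jbar^n\cap I/K)$.

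Therefore equality holds for every $n\ge \ar_J(I)$, and minimality of $\ar_{\Jbar}$ gives the inequality. The only point to be careful about is the use of $K\subseteq I$. It guarantees that the $J^{n+1}$-component $y$ of the lift stays inside $I$. This is the one place where the argument could fail without that hypothesis, and everything else is routine.
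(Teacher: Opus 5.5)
Your proof is correct. The paper does not prove this lemma itself; it simply quotes it as Lemma 2.6 of Quy--Trung.

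So your write-up supplies a self-contained, elementwise verification straight from the definition of $\ar$. For $n\ge\ar_J(I)$ you lift a class in $\Jbar^{n+1}\cap(I/K)$ to some $y\in J^{n+1}$. You then use $K\subseteq I$ to get $y\in J^{n+1}\cap I=J(J^n\cap I)$, and push the resulting expression back down to $\Rbar$. Together with the trivial reverse inclusion, this gives the Artin--Rees equality in $\Rbar$ for every $n\ge\ar_J(I)$.

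Within the paper's own framework there is also a short alternative route, using Lemma~\ref{lem:ARin} and the identity $\ar_J(I)=d(\inm(I))$ recalled in Section 2. Since $\gr_{\Jbar}(\Rbar)\cong\gr_J(R)/\inm(K)$ and $\inm(I/K)=\inm(I)/\inm(K)$, the images of a homogeneous generating set of $\inm(I)$ generate $\inm(I/K)$. Hence the maximal degree of a minimal generator can only drop.

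That route is shorter, but it rests on the nontrivial characterization of the Artin--Rees number through initial ideals. Yours uses nothing beyond $\Jbar^n=(J^n+K)/K$ and the hypothesis $K\subseteq I$. You correctly identify that hypothesis as the point where the $K$-component of the lift is absorbed into $I$.
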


\begin{proof}
This is \cite[Lemma 2.6]{QT}.
\end{proof}

\begin{lemma}\label{lem:ARin}
If $K\subseteq I$, then
\[
 \inm(I/K)=\inm(I)/\inm(K)
\]
in the associated graded ring of $R/K$.  Moreover, if $\inm(I)=\inm(I')$, then
\[
 \ar_J(I)=\ar_J(I').
\]
\end{lemma}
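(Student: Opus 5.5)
The two claims of Lemma~\ref{lem:ARin} are handled separately.

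For the first claim, I will work with the definition of the initial ideal. In $\gr_J(R)$, $\inm(I)$ is generated by the initial forms $f^*$ of the elements $f\in I$. Here $f^*$ is the image of $f$ in $(J^n+\dots)$ with $n$ maximal such that $f\in J^n$, and $f^*=0$ when $f\in\bigcap_n J^n$. Under the convention of \cite{QT} this means
\[
 \inm(I)=\bigoplus_{n\ge 0}\frac{(J^n\cap I)+J^{n+1}}{J^{n+1}}.
\]
Set $\Rbar=R/K$ and $\Jbar=(J+K)/K$. The canonical surjection $\gr_J(R)\to\gr_{\Jbar}(\Rbar)$ is, degreewise, the map
\[
 J^n/J^{n+1}\to (J^n+K)/(J^{n+1}+K),
\]
and its kernel is $\inm(K)$. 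So $\gr_{\Jbar}(\Rbar)\cong\gr_J(R)/\inm(K)$, which is the identity $\gr_J(R/K)\cong\gr_J(R)/\inm(K)$ already recorded in Section~\ref{sec:prelim}.

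I then compute $\inm(I/K)$ degreewise. In degree $n$ it is
\[
 \bigl((J^n+K)\cap I + J^{n+1}+K\bigr)/(J^{n+1}+K).
\]
Because $K\subseteq I$, the modular law gives $(J^n+K)\cap I=(J^n\cap I)+K$. Hence this degree-$n$ piece equals the image of $(J^n\cap I)+J^{n+1}$ in $(J^n+K)/(J^{n+1}+K)$. That image is precisely the image of $\inm(I)_n$ under the surjection above. It follows that $\inm(I/K)$ is the image of $\inm(I)$, namely $(\inm(I)+\inm(K))/\inm(K)$. Since $K\subseteq I$ implies $\inm(K)\subseteq\inm(I)$, this equals $\inm(I)/\inm(K)$. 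The only point needing care is the modular-law step, and it is exactly where the hypothesis $K\subseteq I$ is used.

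For the second claim I will invoke \eqref{eq:AR-initial}: $\ar_J(I)=d(\inm(I))$ and $\ar_J(I')=d(\inm(I'))$. The right-hand sides depend only on the homogeneous ideal. So if $\inm(I)=\inm(I')$, then $d(\inm(I))=d(\inm(I'))$, and the two Artin--Rees numbers coincide.

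Neither part hides a real obstacle. The step most worth checking is that the convention for $\inm$ in \cite{QT} agrees with the degreewise description used above, so that the identification of $\gr_J(R/K)$ with $\gr_J(R)/\inm(K)$ is compatible with taking images of initial forms.
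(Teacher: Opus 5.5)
Your proof is correct. For the second claim you argue exactly as the paper does, via \eqref{eq:AR-initial}. For the first claim the paper gives no argument and simply cites \cite[Lemma 2.2]{QT}.

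You instead verify the first claim directly. You identify $\gr_{\Jbar}(\Rbar)$ with $\gr_J(R)/\inm(K)$, and then use the modular law $(J^n+K)\cap I=(J^n\cap I)+K$ (valid since $K\subseteq I$) to see that $\inm(I/K)$ is degreewise the image of $\inm(I)$. This makes the lemma self-contained at essentially no extra cost.

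The one step you leave implicit is that the kernel of $J^n/J^{n+1}\to(J^n+K)/(J^{n+1}+K)$ is $\inm(K)_n$. It follows from the same modular law, $J^n\cap(J^{n+1}+K)=J^{n+1}+(J^n\cap K)$.
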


\begin{proof}
The first statement is \cite[Lemma 2.2]{QT}; the second follows from \eqref{eq:AR-initial}.
\end{proof}

The key quantitative input is \cite[Proposition 3.4]{QT}.

\begin{proposition}\label{prop:pair}
Let $f_1,f_2$ be a $J$-filter regular sequence and put
\[
 a_1=a_J(0:f_1),
 \qquad
 a_2=a_J\!\left(\frac{(f_1):f_2}{(f_1)}\right).
\]
Set
\[
 c=\max\{a_1+a_2,\ar_J(f_1),\ar_J(f_1,f_2)\}+1.
\]
If $f_1'=f_1+\varepsilon_1$ with $\varepsilon\in J^c$, then $f_1',f_2$ is a $J$-filter regular sequence,
\[
 a_J(0:f_1')=a_1,
 \qquad
 a_J\!\left(\frac{(f_1'):f_2}{(f_1')}\right)\le 2a_2,
\]
and
\[
 \inm(f_1',f_2)=\inm(f_1,f_2).
\]
\end{proposition}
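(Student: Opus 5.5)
The approach is to work modulo $f_2$ rather than modulo $f_1$. The idea is to view $f_1'$ as a one-element perturbation of $f_1$ in $R/(f_2)$, apply Lemma~\ref{lem:one} there, and then transport the information back with Lemma~\ref{lem:switch}.

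\emph{Step 1.} Since $c\ge\max\{a_1,\ar_J(f_1)+1\}$, Lemma~\ref{lem:one} applied in $R$ gives $0:f_1'=0:f_1$, $\inm(f_1')=\inm(f_1)$ and $a_J(0:f_1')=a_1$. In particular $f_1'$ is $J$-filter regular.

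\emph{Step 2.} Pass to $\Rbar=R/(f_2)$ and check that $\bar f_1$ is $\Jbar$-filter regular with a controlled bound. Lemma~\ref{lem:switch} with $u=f_1$, $v=f_2$ gives
\[
\frac{(f_2):f_1}{(f_2)+(0:f_1)}\cong \frac{(f_1):f_2}{(f_1)+(0:f_2)} .
\]
The right side is a quotient of $((f_1):f_2)/(f_1)$, so it is killed by $J^{a_2}$. Also $((f_2)+(0:f_1))/(f_2)$ is a quotient of $0:f_1$, so it is killed by $J^{a_1}$. Together these give
\[
a_J\bigl(((f_2):f_1)/(f_2)\bigr)\le a_1+a_2 .
\]
By Lemma~\ref{lem:ARquot} with $K=(f_2)\subseteq(f_1,f_2)$, we get $\ar_{\Jbar}(\bar f_1)\le\ar_J(f_1,f_2)$. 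Hence $c\ge \max\{a_1+a_2,\ar_{\Jbar}(\bar f_1)+1\}$, and Lemma~\ref{lem:one} in $\Rbar$ yields
\[
(f_2):f_1'=(f_2):f_1,\qquad \inm\bigl((f_1',f_2)/(f_2)\bigr)=\inm\bigl((f_1,f_2)/(f_2)\bigr).
\]
By the first part of Lemma~\ref{lem:ARin} the latter reads $\inm(f_1',f_2)/\inm(f_2)=\inm(f_1,f_2)/\inm(f_2)$, so $\inm(f_1',f_2)=\inm(f_1,f_2)$.

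\emph{Step 3: the bound $2a_2$.} Apply Lemma~\ref{lem:switch} again, now with $u=f_1'$, $v=f_2$, and use Steps 1 and 2:
\[
\frac{(f_1'):f_2}{(f_1')+(0:f_2)}\cong\frac{(f_2):f_1'}{(f_2)+(0:f_1')}=\frac{(f_2):f_1}{(f_2)+(0:f_1)}\cong \frac{(f_1):f_2}{(f_1)+(0:f_2)} .
\]
This module is killed by $J^{a_2}$. It remains to show that $J^{a_2}$ also kills $((f_1')+(0:f_2))/(f_1')$, that is, $J^{a_2}(0:f_2)\subseteq (f_1')$. Combined with the above, this gives $J^{2a_2}$ annihilating $((f_1'):f_2)/(f_1')$, and in particular filter regularity of $f_1',f_2$.

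This last inclusion is the step I expect to be the main obstacle. The plan is as follows. Since $0:f_2\subseteq(f_1):f_2$, for $t\in J^{a_2}$ and $z\in 0:f_2$ we have $tz=wf_1$ with $wf_1f_2=0$. Thus $wf_2\in 0:f_1=0:f_1'$, and
\[
tz=wf_1'-w\varepsilon_1 .
\]
One must absorb $w\varepsilon_1$ into $(f_1')$. For this I would use that $w\varepsilon_1\in J^c\cap(f_1,f_2)$ and apply the Artin--Rees bound $\ar_J(f_1,f_2)<c$ together with the equality $\inm(f_1',f_2)=\inm(f_1,f_2)$ from Step 2. The aim is to rewrite $w\varepsilon_1$ as a combination of $f_1'$ and $f_2$ with coefficients deep in $J$. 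One then reduces the $f_2$-part using $z\in 0:f_2$ and the fact that $(f_2):f_1'=(f_2):f_1$, and concludes by a Krull-intersection style descent.

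If this direct descent fails, the fallback is to replace the target $J^{a_2}(0:f_2)\subseteq(f_1')$ by the weaker statement that $J^{a_2}$ kills the image of $0:f_2$ in $((f_1'):f_2)/(f_1')$ modulo $((f_1')+(0:f_2))$-torsion. This still yields the factor $2a_2$.
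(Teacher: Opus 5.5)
The paper does not prove this proposition itself; it quotes it from Quy--N.~V.~Trung. Your Steps 1 and 2, and the switch-lemma half of Step 3, are correct. They give $0:f_1'=0:f_1$ and $a_J\bigl(((f_2):f_1)/(f_2)\bigr)\le a_1+a_2$. Hence $(f_2):f_1'=(f_2):f_1$ and $\inm(f_1',f_2)=\inm(f_1,f_2)$, and then $J^{a_2}\bigl((f_1'):f_2\bigr)\subseteq (f_1')+(0:f_2)$.

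\textbf{The gap.} It is exactly the step you flag: the inclusion $J^{a_2}(0:f_2)\subseteq(f_1')$ is never established. Without it you have no bound on $((f_1'):f_2)/(f_1')$ at all, not even filter regularity of $f_1',f_2$. Your sketch does not close it:
\begin{itemize}
\item The assertion $w\varepsilon_1\in(f_1,f_2)$ is not justified at that point. From $tz=wf_1'-w\varepsilon_1$ you only get $w\varepsilon_1\in(f_1)+(f_1')$.
\item Even granting it, writing $w\varepsilon_1=\alpha f_1'+\beta f_2$ with $\alpha,\beta$ deep in $J$ leaves a term $\beta f_2$. You do not explain how it is absorbed into $(f_1')$, and the ``Krull-intersection descent'' is not specified.
\item The fallback is not a well-defined statement. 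Anything weaker than exact annihilation of $((f_1')+(0:f_2))/(f_1')$ by $J^{a_2}$ will not add up to $2a_2$.
\end{itemize}

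\textbf{The missing idea.} It uses $a_1$, not Artin--Rees numbers or initial ideals. Let $x=wf_1$ be any element of $N:=(f_1)\cap(0:f_2)$. Then $wf_2\in 0:f_1$, so $J^{a_1}wf_2=0$, i.e.\ $J^{a_1}w\subseteq 0:f_2$. Since $c\ge a_1+a_2+1$, we have $\varepsilon_1\in J^{a_2+1}J^{a_1}$, hence
\[
w\varepsilon_1\in J^{a_2+1}(0:f_2)=J\cdot J^{a_2}(0:f_2)\subseteq J\,\bigl((f_1)\cap(0:f_2)\bigr)=JN .
\]
Here we used $J^{a_2}(0:f_2)\subseteq J^{a_2}((f_1):f_2)\subseteq(f_1)$. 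Also $wf_1'\in P:=(f_1')\cap(0:f_2)$, because $wf_2\in 0:f_1=0:f_1'$. Since $x=wf_1'-w\varepsilon_1$, this gives $N\subseteq P+JN$. Nakayama's lemma (with $J\subseteq\mathfrak m$) then yields $N\subseteq P$. Therefore $J^{a_2}(0:f_2)\subseteq N\subseteq(f_1')$. Combined with your Step 3, this gives $J^{2a_2}((f_1'):f_2)\subseteq(f_1')$.

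With this argument inserted your proof is complete. As written, the key estimate is missing.
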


\section{Main Results}\label{sec:reverse}
In this section, we will prove the Theorem \ref{thm:intro-main}.

The following lemma is a new ingredient in our reverse perturbation argument, inspired by the proof of \cite[Theorem 3.5]{QT}.
\begin{lemma}\label{lem:first-entry}
Let $
 f_1,\ldots,f_r
$
be a $J$-filter regular sequence. For $ i=1,\ldots,r$,  put
\[
 a_i=
 a_J\!\left(
 \frac{(f_1,\ldots,f_{i-1}):f_i}{(f_1,\ldots,f_{i-1})}
 \right).
\]
Set
\[
 L=\max\left\{
 a_1+\cdots+a_r,
 \ar_J(f_1),\ldots,\ar_J(f_1,\ldots,f_r)
 \right\}+1.
\]
If
$
 f_1'=f_1+\varepsilon$ where  $\varepsilon \in J^L,$ then
\begin{enumerate}[label=\textup{(\roman*)}]
 \item $f_1',f_2,\ldots,f_r$ is a $J$-filter regular sequence.
 \item $a_J(0:f_1')=a_1$ and for every $i=2,\ldots,r$,
$$
 a_J\!\left(
 \frac{(f_1',f_2,\ldots,f_{i-1}):f_i}
 {(f_1',f_2,\ldots,f_{i-1})}
 \right)
 \le 2a_i.
 $$
 \item For every $i=1,\ldots,r$,
$$
 \inm(f_1',f_2,\ldots,f_i)
 =
 \inm(f_1,f_2,\ldots,f_i).
$$
\end{enumerate}
\end{lemma}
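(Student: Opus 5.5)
The plan is to argue by induction on $r$, following the structure of the proof of \cite[Theorem 3.5]{QT}, with the difference that only $f_1$ moves and $f_2,\ldots,f_r$ stay fixed. For $r=1$ the number $L$ is at least $\max\{a_1,\ar_J(f_1)+1\}$, so Lemma~\ref{lem:one} gives (i)--(iii) directly. For $r=2$ we have $L=\max\{a_1+a_2,\ar_J(f_1),\ar_J(f_1,f_2)\}+1$, which is exactly the constant of Proposition~\ref{prop:pair}, so that case is already proved. For $r\ge 3$, the number $L$ attached to $f_1,\ldots,f_r$ dominates the one attached to the prefix $f_1,\ldots,f_{r-1}$. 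The induction hypothesis therefore gives all statements for $i\le r-1$, in particular
\[
\inm(f_1',f_2,\ldots,f_{r-1})=\inm(f_1,\ldots,f_{r-1}).
\]
It remains to treat the index $i=r$.

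Set $K=(f_2,\ldots,f_{r-1})$. The first quantitative step is to bound the Loewy length of
\[
\frac{(K:f_1)+\ldots}{\ldots}\quad\text{and more precisely}\quad \frac{(K,f_r):f_1}{(K,f_r)+(0:f_1)} .
\]
I would do this by repeatedly applying the switching isomorphism \eqref{eq:switchK} to move $f_1$ past $f_r,f_{r-1},\ldots,f_2$. Each switch turns a colon module "by $f_1$" into a colon module "by $f_j$ over a prefix containing $f_1$". The Loewy length of the latter is controlled by $a_j$, through short exact sequences comparing $(K,f_1):f_j$ with $(f_1,\ldots,f_{j-1}):f_j$. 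Adding up these contributions, one expects that $J^{a_1+\cdots+a_r}$ annihilates the ambiguity in the coefficient of $f_1$ in any expression
\[
x=x_1f_1+\cdots+x_rf_r .
\]
This is exactly where the sum $a_1+\cdots+a_r$ in $L$ enters.

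Next comes the equality of initial ideals $\inm(f_1',f_2,\ldots,f_r)=\inm(f_1,\ldots,f_r)$. Take $x\in(f_1,\ldots,f_r)\cap J^n$. Since $n\ge A_r=\ar_J(f_1,\ldots,f_r)$ is the relevant range, the Artin--Rees property lets us choose a representation with $x_j\in J^{n-A_r}$-type control. Then
\[
x'=x+x_1\varepsilon\in(f_1',f_2,\ldots,f_r),
\]
and $x_1\varepsilon\in J^{n+1}$ because $\varepsilon\in J^L$ with $L>A_r$. Hence $x'$ has the same initial form as $x$, which gives $\inm(f_1,\ldots,f_r)\subseteq \inm(f_1',f_2,\ldots,f_r)$. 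For the reverse inclusion I would run the same argument with $f_1=f_1'-\varepsilon$. The data for $f_1'$ coming from the induction hypothesis (and from Lemma~\ref{lem:ARin}, which preserves the $\ar$-values once initial ideals agree) is good enough for this, since the previous step shows the coefficient ambiguity is still killed by $J^{a_1+\cdots+a_r}$.

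Finally, I would derive filter regularity and the bound $2a_r$ for $f_r$ modulo $Q'=(f_1',f_2,\ldots,f_{r-1})$. Suppose $yf_r\in Q'$. Then $J^{a_r}y$ multiplies $f_r$ into $Q=(f_1,\ldots,f_{r-1})$ up to terms $z\varepsilon$. Using $\inm(Q)=\inm(Q')$ and the equality at level $r$ just proved, one shows $J^{a_r}\cdot J^{a_r}y\subseteq Q'$. This mimics the doubling step in Proposition~\ref{prop:pair}, with $R$ replaced by $R/K$; Lemma~\ref{lem:ARquot} guarantees that the Artin--Rees numbers do not grow when passing to $R/K$. I expect the main obstacle to be the bookkeeping in the switching step, namely proving that the Loewy length of the coefficient ambiguity for $f_1$ is at most $a_1+\cdots+a_r$ and not something exponentially larger. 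I would attempt it first via a chain of exact sequences, each of which adds exactly one $a_j$.
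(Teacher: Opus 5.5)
There is a genuine gap, and it sits in the reverse inclusion $\inm(f_1',f_2,\ldots,f_r)\subseteq\inm(f_1,\ldots,f_r)$.

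Your forward inclusion is fine. For $n\ge A_r$, Artin--Rees gives $x=\sum x_jf_j$ with $x_j\in J^{n-A_r}$, so $x_1\varepsilon\in J^{n+1}$. For $n<A_r$ this holds automatically, since $L>A_r$.

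The reverse inclusion is not symmetric, however. "Running the same argument with $f_1=f_1'-\varepsilon$" needs, for $x'\in(f_1',f_2,\ldots,f_r)\cap J^n$, a representation whose $f_1'$-coefficient lies in $J^{n+1-L}$. That is Artin--Rees control of the perturbed ideal $(f_1',f_2,\ldots,f_r)$ itself. Your induction hypothesis only controls the prefixes ending at $f_{r-1}$. Lemma~\ref{lem:ARin} gives $\ar_J(f_1',f_2,\ldots,f_r)=A_r$ only after the equality of initial ideals is known, so the step is circular. Knowing that the $f_1$-coefficient is determined modulo a colon ideal killed by $J^{a_1+\cdots+a_r}$ does not by itself give degree control of that coefficient.

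Your doubling step for $f_r$ modulo $(f_1',f_2,\ldots,f_{r-1})$ explicitly uses "the equality at level $r$ just proved" and is otherwise only sketched, so it inherits the gap. The colon bound is also left at "one expects". Note too that a single switch in $R/K$ produces the denominator $(K,f_r)+(K:f_1)$, not $(K,f_r)+(0:f_1)$.

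The missing idea is that none of this has to be done by hand. You mention "$R$ replaced by $R/K$" but do not use it fully: Proposition~\ref{prop:pair}, applied in the quotient, delivers everything at once.

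First, put $K_i=(f_2,\ldots,f_i)$. Apply \eqref{eq:switchK} once in $R/(f_2,\ldots,f_{i-1})$ to $f_1$ and $f_i$. This gives $J^{a_i}(K_i:f_1)\subseteq K_i+(K_{i-1}:f_1)$, and by induction on $i$ you get $a_J((K_i:f_1)/K_i)\le a_1+\cdots+a_i$. This is exactly your "chain adding one $a_j$ at a time", and it is the only place the sum enters.

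Then in $R_i=R/K_i$ the pair $f_1,f_{i+1}$ is $J$-filter regular, with Loewy lengths at most $a_1+\cdots+a_i$ and exactly $a_{i+1}$. By Lemma~\ref{lem:ARquot}, its Artin--Rees numbers are at most $\ar_J(f_1,\ldots,f_i)$ and $\ar_J(f_1,\ldots,f_{i+1})$. Hence $L$ dominates the constant of Proposition~\ref{prop:pair} in $R_i$.

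That proposition gives, simultaneously, filter regularity, the bound $2a_{i+1}$, and $\inm((f_1',f_{i+1})R_i)=\inm((f_1,f_{i+1})R_i)$. The last equality lifts to $\inm(f_1',f_2,\ldots,f_{i+1})=\inm(f_1,\ldots,f_{i+1})$ by Lemma~\ref{lem:ARin}. With this route you need neither an induction on $r$ nor any direct Artin--Rees manipulation of the perturbed ideal.
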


\begin{proof}
For $1 \le i\le r$, put
$
 K_i=(f_2,\ldots,f_i),
$
with $K_1=(0)$.  We first claim that
\begin{equation}\label{eq:reverse-colon-bound}
 a_J\!\left(\frac{K_i:f_1}{K_i}\right)
 \le a_1+\cdots+a_i
 \qquad(i=1,\ldots,r).
\end{equation}
For $i=1$ this is the definition of $c_1$.

Assume $i\ge2$ and that \eqref{eq:reverse-colon-bound} is known for $i-1$.  Apply Lemma~\ref{lem:switch} in 
$\overline{R} = R/(f_2,\ldots,f_{i-1})$ to the images of $f_1$ and $f_i$.  We obtain
\begin{equation}\label{eq:valid-switch-step}
 \frac{(f_1,\ldots,f_{i-1}):f_i}
 {(f_1,\ldots,f_{i-1})+(f_2,\ldots,f_{i-1}):f_i}
 \cong
 \frac{K_i:f_1}{K_i+(K_{i-1}:f_1)}.
\end{equation}
Since
\[
 J^{a_i}\big((f_1,\ldots,f_{i-1}):f_i\big)
 \subseteq(f_1,\ldots,f_{i-1}),
\]
it follows that
\[
 J^{a_i}\big((f_1,\ldots,f_{i-1}):f_i\big)
 \subseteq(f_1,\ldots,f_{i-1})+(f_2,\ldots,f_{i-1}):f_i.
\]
Therefore, the left-hand quotient in \eqref{eq:valid-switch-step} is annihilated by $J^{a_i}$.  Hence
\begin{equation}\label{eq:one-colon-step}
 J^{a_i}(K_i:f_1)
 \subseteq K_i+(K_{i-1}:f_1).
\end{equation}
By the induction hypothesis,
\[
 J^{a_1+\cdots+a_{i-1}}(K_{i-1}:f_1)\subseteq K_{i-1}\subseteq K_i.
\]
Multiplying \eqref{eq:one-colon-step} by $J^{a_1+\cdots+a_{i-1}}$, we have
$$J^{a_1+\cdots+a_{i-1}+a_i}(K_i:f_1) \subseteq J^{a_1+\cdots+a_{i-1}}(K_i+(K_{i-1}:f_1)) \subseteq K_i.  $$
It proves \eqref{eq:reverse-colon-bound}.

Now fix $i\in\{1,\ldots,r-1\}$ and set 
$
 R_i=R/K_i=R/(f_2,\ldots,f_i).
$
By \eqref{eq:reverse-colon-bound}, the image of $f_1$ is a $J$-filter regular element in $R_i$ and
\[
 a_J(0_{R_i}:f_1)\le a_1+\cdots+a_i < \infty.
\]
On the other hand,
$$a_J\left(\frac{f_1R_i:f_{i+1}}{f_1R_i}\right)= a_J\left(\frac{(f_1,f_2,\ldots,f_{i}):f_{i+1}}{(f_1,f_2,\ldots,f_{i})}\right)=a_{i+1}<\infty.
$$
  Thus $f_1,f_{i+1}$ is a $J$-filter regular sequence in $R_i$.

By Lemma~\ref{lem:ARquot},
\begin{align*}
 \ar_{JR_i}(f_1R_i)&\le \ar_J(f_1,\ldots,f_i),\\
 \ar_{JR_i}((f_1,f_{i+1})R_i)&\le \ar_J(f_1,\ldots,f_{i+1}).
\end{align*}
Therefore,
$$L \geq \max\left\{a_J(0_{R_i}:f_1) + a_J\left(\frac{f_1R_i:f_{i+1}}{f_1R_i}\right), \ar_{JR_i}(f_1R_i), \ar_{JR_i}((f_1,f_{i+1})R_i) \right\}+1$$
This allows Proposition~\ref{prop:pair} to be applied in $R_i$.  We obtain
\[
 a_J\left(\frac{f_1'R_i:f_{i+1}}{f_1'R_i}\right) \leq 2a_J\left(\frac{f_1R_i:f_{i+1}}{f_1R_i}\right) 
 = 2a_{i+1}
\]
It follows that 
$$a_J\!\left(
 \frac{(f_1',f_2,\ldots,f_i):f_{i+1}}
 {(f_1',f_2,\ldots,f_i)}
 \right) \leq  2a_{i+1} $$
In addition, 
$$\inm((f_1',f_{i+1})R_i) = \inm((f_1,f_{i+1})R_i).$$
It follows that $f_1', f_2,\ldots,f_r$ is a $J$-filter regular sequence in $R$ and 
\[
 \inm(f_1',f_2,\ldots,f_{i+1})
 =
 \inm(f_1,f_2,\ldots,f_{i+1}).
\]
Finally, Lemma~\ref{lem:one} applied to $f_1$ gives
$a_J(0:f_1')=a_1$ and $\inm(f_1')=\inm(f_1)$.  As the preceding argument works for every $i=1,\ldots,r-1$, the proof is complete.
\end{proof}
For $i=1,\ldots,r$, set
$$
 a_i=a_J\!\left(\frac{(f_1,\ldots,f_{i-1}):f_i}{(f_1,\ldots,f_{i-1})}\right).
$$
The reverse-order construction first perturbs
\[
 f_r,f_{r-1},\ldots,f_2
\]
while keeping $f_1$ fixed.  For $2\le k\le r$, after the entries $f_r,\ldots,f_k$ have been perturbed, write
\[
 \mathbf f^{[k]}
 =
 f_1,\ldots,f_{k-1},f_k',f_{k+1}',\ldots,f_r'.
\]
For $j\ge k$, set
$$
 d_j^{[k]}
 =a_J\!\left(
 \frac{(f_1,\ldots,f_{k-1},f_k',\ldots,f_{j-1}'):f_j'}
 {(f_1,\ldots,f_{k-1},f_k',\ldots,f_{j-1}')}
 \right).
$$

\begin{proposition}\label{prop:reverse-tail}
Assume
\[
 N = \max\{a_1+a_2+2a_3+\cdots+2^{r-2}a_r,\ar_J(f_1),\ldots,\ar(f_1,\ldots,f_r)\}+1
\]
and $f_i'=f_i+\varepsilon_i$ with $\varepsilon_i\in J^N$.  Then, after perturbing $f_r,f_{r-1},\ldots,f_k$ in that order, for every $k=2,\ldots,r$ the following hold.
\begin{enumerate}[label=\textup{(\roman*)}]
 \item $\mathbf f^{[k]} = f_1,\ldots,f_{k-1},f_k',f_{k+1}',\ldots,f_r'$ is a $J$-filter regular sequence with  
$$ d_j^{[k]}\le 2^{j-k}a_j;$$
 for $j\ge k$. In particular $d_k^{[k]}=a_k$.
 \item For every $i=1,\ldots,r$,
$$
 \inm(f_1,\ldots,f_{k-1},f_k',\ldots,f_i')
 =\inm(f_1,\ldots,f_{k-1},f_k,\ldots,f_i).
$$
\end{enumerate}
\end{proposition}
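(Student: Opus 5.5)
We argue by descending induction on $k$, from $k=r$ down to $k=2$, at each stage applying Lemma~\ref{lem:first-entry} in a quotient ring where the entry currently being perturbed plays the role of the first element. The point of the reverse order is this: when $f_k$ is perturbed, the prefix $f_1,\ldots,f_{k-1}$ is unchanged. Hence the relevant colon module of $f_k$ over the prefix has Loewy length exactly $a_k$, not a doubled value.

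\emph{Inductive step.} Suppose the statement holds for $\mathbf f^{[k+1]}$; for $k=r$ the sequence is just $f_1,\ldots,f_r$ with $d_j=a_j$. Pass to $\Rbar=R/(f_1,\ldots,f_{k-1})$ with $\Jbar=J\Rbar$. In $\Rbar$ the images of $f_k,f_{k+1}',\ldots,f_r'$ form a $\Jbar$-filter regular sequence whose successive Loewy lengths are
\[
d_k^{[k+1]}=a_k,\qquad d_j^{[k+1]}\le 2^{j-k-1}a_j\quad (j>k),
\]
by the induction hypothesis. For the Artin--Rees numbers we use Lemma~\ref{lem:ARquot} together with the equality of initial ideals from part (ii) at stage $k+1$ and Lemma~\ref{lem:ARin}. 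This gives
\[
\ar_{\Jbar}\bigl((f_k,f'_{k+1},\ldots,f'_j)\Rbar\bigr)\le \ar_J(f_1,\ldots,f_{k-1},f_k,f'_{k+1},\ldots,f'_j)=\ar_J(f_1,\ldots,f_j).
\]

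\emph{Checking the exponent.} The quantity $L$ of Lemma~\ref{lem:first-entry} in $\Rbar$ is then bounded by
\[
L\le \max\Bigl\{a_k+\sum_{j>k}2^{j-k-1}a_j,\ \ar_J(f_1,\ldots,f_k),\ldots,\ar_J(f_1,\ldots,f_r)\Bigr\}+1.
\]
For $k\ge2$, the weighted sum $a_k+a_{k+1}+2a_{k+2}+\cdots+2^{r-k-1}a_r$ is dominated termwise by $a_1+a_2+2a_3+\cdots+2^{r-2}a_r$. Indeed, $a_j$ has coefficient $2^{j-k-1}\le 2^{j-2}$ for $j>k$, and $a_k$ has coefficient $1$. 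Hence $N\ge L$, and so $\varepsilon_k\in J^N\subseteq J^L$.

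\emph{Conclusions in $\Rbar$ and lifting to $R$.} Lemma~\ref{lem:first-entry} shows that $f_k',f_{k+1}',\ldots,f_r'$ is $\Jbar$-filter regular in $\Rbar$. It also gives $a_{\Jbar}(0:_{\Rbar}f_k')=a_k$, which is $d_k^{[k]}=a_k$. For $j>k$ it gives the doubled bound
\[
d_j^{[k]}\le 2\,d_j^{[k+1]}\le 2^{j-k}a_j,
\]
together with $\inm$ equalities in $\Rbar$. To return to $R$, note that colon modules modulo $(f_1,\ldots,f_{k-1})$ are identical in $R$ and $\Rbar$. For the initial ideals, Lemma~\ref{lem:ARin} gives $\inm(I/K)=\inm(I)/\inm(K)$ with $K=(f_1,\ldots,f_{k-1})$, so equality of the quotients lifts to equality of $\inm(f_1,\ldots,f_{k-1},f_k',\ldots,f_i')$ and $\inm(f_1,\ldots,f_{k-1},f_k,f'_{k+1},\ldots,f'_i)$. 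The induction hypothesis then identifies the latter with $\inm(f_1,\ldots,f_i)$. For $i<k$ the statement is trivial, and the terms $j<k$ of the filter-regular condition are unchanged.

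The main obstacle is the Artin--Rees bookkeeping. Lemma~\ref{lem:first-entry} needs the Artin--Rees numbers of the partially perturbed sequence in $\Rbar$, not those of the original sequence. These are controlled only because part (ii) of the previous stage has already been established, which makes those numbers equal to $\ar_J(f_1,\ldots,f_j)$. So (i) and (ii) must be carried through the induction simultaneously.
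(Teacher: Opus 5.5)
Your proposal is correct and follows the paper's proof essentially step for step. Both argue by descending induction on $k$ and apply Lemma~\ref{lem:first-entry} in $R/(f_1,\ldots,f_{k-1})$. The Artin--Rees numbers are controlled by Lemma~\ref{lem:ARquot} combined with part (ii) of the previous stage and Lemma~\ref{lem:ARin}, and the initial ideals are lifted back via $\inm(I/K)=\inm(I)/\inm(K)$. The differences are cosmetic: you absorb the base case $k=r$ into the inductive step as Lemma~\ref{lem:first-entry} for a single element, whereas the paper treats it separately via Lemma~\ref{lem:one}. You also spell out the coefficient comparison $a_k+a_{k+1}+2a_{k+2}+\cdots+2^{r-k-1}a_r\le a_1+a_2+2a_3+\cdots+2^{r-2}a_r$ for $k\ge 2$, which the paper leaves implicit.
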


\begin{proof}
We argue by descending induction on $k$.

For the base case $k=r$, set $\overline{R}=R/(f_1,\ldots,f_{r-1})$.  The image of $f_r$ is a $J$-filter regular element in $\overline{R}$ with 
$$a_{J}(0_{\overline{R}}:f_r) = a_J\left(\frac{(f_1,\ldots,f_{r-1}):f_r}{(f_1,\ldots,f_{r-1})}\right)=a_r.$$
and Lemma~\ref{lem:ARquot} gives
\[
 \ar_{J\overline{R}}(f_r\overline{R})\le \ar_J(f_1,\ldots,f_r).
\]
Since $a_1+a_2+2a_3+\cdots+2^{r-2}a_r\ge a_r$ and $$N\ge\max\{a_r,\ar(f_1,\ldots,f_r)\}+1.$$ By Lemma~\ref{lem:one}, we get $f_r'$ is a $J$-filter regular element in $\overline{R}$ and
$$a_{J}(0_{\overline{R}}:f_r') =a_{J}(0_{\overline{R}}:f_r)=a_r.$$
On the other hand,
 $$d_r^{[r]} = a_J\left(\frac{(f_1,\ldots,f_{r-1}):f_r'}{(f_1,\ldots,f_{r-1})}\right) = a_r.$$
and
\[
 \inm(f_r'\overline{R})=\inm(f_r\overline{R}).
\]
Thus 
$$\inm(f_1,\ldots,f_{r-1},f_r') = \inm(f_1,\ldots,f_{r-1},f_r).$$
Thus, the claim holds for $k=r$.\\
Now let $1\leq k<r$ and set
\[
S=R/(f_1,\ldots,f_{k-1}).
\]
By the descending induction hypothesis, the assertion holds for $k+1$. We may assume that
\begin{enumerate}[label=\textup{(\roman*')}]
\item $\mathbf f^{[k+1]}=f_1,\ldots,f_k,f_{k+1}',\ldots,f_r'$ is a $J$-filter regular sequence in $R$ with
$$d_{k+1}^{[k+1]} = a_J(f_kS:f_{k+1}) = a_J\left(\frac{(f_1,\ldots,f_{k}):f_{k+1}}{(f_1,\ldots,f_{k})}\right)=a_{k+1}$$
and 
 $$d_j^{[k+1]}  =a_J\!\left(
 \frac{(f_1,\ldots,f_{k},f_{k+1}',\ldots,f_{j-1}'):f_j'}
 {(f_1,\ldots,f_{k},f_{k+1}',\ldots,f_{j-1}')}
 \right) \leq  2^{j-k-1}a_j$$
 for $j=k+2,\ldots,r$.
 
 \item $\inm((f_k,f_{k+1}',\ldots,f_i')S) = \inm((f_k,f_{k+1},\ldots,f_i)S)$ for $i=k,\ldots,r$.
\end{enumerate}
We will prove the assertion holds for $k.$
For every $k\ge2$, by (i')
\begin{align*}\label{1}
  a_J\left(0_S:f_k\right)+a_J\left(\frac{f_kS:f_{k+1}'}{f_kS}\right)+& \cdots +  a_J\left(\frac{(f_k,f_{k+1}',\ldots,f_{r-1}')S:f_{r}'}{(f_k,f_{k+1}',\ldots,f_{r-1}')S}\right)   \\
   & \leq a_k+a_{k+1}+2a_{k+2}+\cdots+2^{r-k-1}a_r= C_k.
\end{align*}
By Lemma ~\ref{lem:ARin} and (ii') implies
$$\inm(f_1,\ldots,f_k,f_{k+1}',\ldots,f_j') = \inm(f_1,\ldots,f_k,f_{k+1},\ldots,f_j)$$
and
$$\ar_J(f_1,\ldots,f_k,f_{k+1}',\ldots,f_j') = \ar_J(f_1,\ldots,f_k,f_{k+1},\ldots,f_j)$$
for $j\ge k$.

On the ring $S=R/(f_1,\ldots,f_{k-1})$, by Lemma~\ref{lem:ARquot}, we have
$$\ar_{JS}(f_kS) \leq \ar_J(f_1,\ldots,f_{k-1},f_k)$$
and
$$\ar_{JS}((f_k,f_{k+1}'\ldots,f_i')S) \leq \ar_J(f_1,\ldots,f_k,f_{k+1}',\ldots,f_i')$$
for $i \geq k$.
It follows that
$$\max\{C_k,\ar_{JS}(f_kS),\ldots, \ar_{JS}((f_k,f_{k+1}'\ldots,f_r')S) \}+1 \leq N$$
Applying Lemma~\ref{lem:first-entry} in $S$, we get
$f_k',f_{k+1}',\ldots,f_r'$ is a $J$-filter regular sequence in $S$ with $d_k^{[k]}=a_J(0_S:f_k')=a_k$ and for $j>k$
$$d_j^{[k]} \le 2 d_j^{[k+1]} \leq 2\cdot2^{j-k-1}a_j=2^{j-k}a_j.$$
On the other hand, for $i \geq k$
$$\inm((f_k',f_{k+1}',\ldots, f_{i}')S) = \inm((f_k,f_{k+1}',\ldots, f_{i}')S)$$
By (ii'), we have
$$\inm((f_k,f_{k+1}',\ldots, f_{i}')S) = \inm((f_k,f_{k+1},\ldots,f_i)S)$$
for $i \geq k$.
Hence
$$\inm((f_1,\ldots,f_{k-1},f_k',\ldots, f_i'))=\inm((f_1,\ldots,f_{k-1},f_k,\ldots, f_i))$$
for $i \geq k$.
This completes the descending induction.
\end{proof}

\begin{theorem}\label{thm:main}
Let $J$ be an arbitrary ideal of a Noetherian local ring $(R,\mm)$ and let
$
 I=(f_1,\ldots,f_r),
$
where $f_1,\ldots,f_r$ is a $J$-filter regular sequence and $r\ge2$.  Define $$
 a_i=a_J\!\left(\frac{(f_1,\ldots,f_{i-1}):f_i}{(f_1,\ldots,f_{i-1})}\right).
$$ Set
$$
 N=
 \max\left\{
 a_1+a_2+2a_3+\cdots+2^{r-2}a_r,
 \ar_J(f_1),\ldots,\ar(f_1,\ldots,f_r)
 \right\}+1.
$$
Let
\[
 f_i'=f_i+\varepsilon_i,
 \qquad \varepsilon_i\in J^N,
 \qquad I'=(f_1',\ldots,f_r').
\]
Then
\begin{enumerate}[label=\textup{(\roman*)}]
 \item $f_1',\ldots,f_r'$ is a $J$-filter regular sequence.
 \item For every $i=1,\ldots,r$,
$$
 a_J\!\left(
 \frac{(f_1',\ldots,f_{i-1}'):f_i'}
 {(f_1',\ldots,f_{i-1}')}
 \right)
 \le2^{i-1}a_i.
$$
 \item For every $i=1,\ldots,r$,
$$
 \inm(f_1',\ldots,f_i')=\inm(f_1,\ldots,f_i);
$$
 in particular $\inm(I')=\inm(I)$.
 \item 
 \[
 \gr_J(R/I')\cong\gr_J(R/I),
 \qquad
 \ar_J(I')=\ar_J(I).
 \]
\end{enumerate}
\end{theorem}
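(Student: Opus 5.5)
The plan is to obtain Theorem~\ref{thm:main} in two stages. First I would perturb the tail $f_r,\ldots,f_2$ by Proposition~\ref{prop:reverse-tail}. Then I would perturb $f_1$ last by Lemma~\ref{lem:first-entry}, applied to the sequence $\mathbf f^{[2]}=f_1,f_2',\ldots,f_r'$.

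Applying Proposition~\ref{prop:reverse-tail} with $k=2$ yields the following facts:
\begin{itemize}
\item $\mathbf f^{[2]}$ is a $J$-filter regular sequence;
\item $d_j^{[2]}\le 2^{j-2}a_j$ for all $j\ge 2$;
\item $a_J(0:f_1)=a_1$, since $f_1$ is unchanged;
\item $\inm(f_1,f_2',\ldots,f_i')=\inm(f_1,\ldots,f_i)$ for every $i$.
\end{itemize}
By Lemma~\ref{lem:ARin}, the last equality also gives $\ar_J(f_1,f_2',\ldots,f_i')=\ar_J(f_1,\ldots,f_i)=A_i$.

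Next I would check the hypothesis of Lemma~\ref{lem:first-entry} for $\mathbf f^{[2]}$. Its $L$-value is
\[
\max\{a_1+d_2^{[2]}+\cdots+d_r^{[2]},\,A_1,\ldots,A_r\}+1 \le \max\{a_1+a_2+2a_3+\cdots+2^{r-2}a_r,\,A_1,\ldots,A_r\}+1=N .
\]
So $\varepsilon_1\in J^N\subseteq J^L$, and the lemma applies with $f_1'=f_1+\varepsilon_1$. It gives four conclusions:
\begin{itemize}
\item $f_1',f_2',\ldots,f_r'$ is $J$-filter regular, which is (i);
\item $a_J(0:f_1')=a_1$;
\item for $i\ge2$ the $i$-th Loewy length is at most $2d_i^{[2]}\le 2^{i-1}a_i$, which together with the previous item is (ii);
\item $\inm(f_1',\ldots,f_i')=\inm(f_1,f_2',\ldots,f_i')=\inm(f_1,\ldots,f_i)$, which is (iii).
\end{itemize}

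Part (iv) then follows from $\gr_J(R/I)\cong\gr_J(R)/\inm(I)$ together with Lemma~\ref{lem:ARin}.

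The main point needing care is the bookkeeping in the middle step. Lemma~\ref{lem:first-entry} has to be applied to the already perturbed tail, not to the original sequence. Its "$a_i$" are therefore the $d_i^{[2]}$ and its Artin--Rees numbers are those of $(f_1,f_2',\ldots,f_i')$. The inequality $d_j^{[2]}\le 2^{j-2}a_j$, combined with the equality of the Artin--Rees numbers, is exactly what makes the sum collapse to the weighted term $a_1+a_2+2a_3+\cdots+2^{r-2}a_r$ in $N$.

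I also need to confirm that perturbing $f_1$ last does not disturb the earlier conclusions. This holds because Lemma~\ref{lem:first-entry} leaves the other entries $f_2',\ldots,f_r'$ fixed.
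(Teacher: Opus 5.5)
Your proposal is correct and is essentially the paper's own proof. You apply Proposition~\ref{prop:reverse-tail} down to $k=2$, then use the bounds $d_j^{[2]}\le 2^{j-2}a_j$ together with the equality of Artin--Rees numbers from Lemma~\ref{lem:ARin} to see that the $L$ of Lemma~\ref{lem:first-entry} for $f_1,f_2',\ldots,f_r'$ is at most $N$, perturb $f_1$ last, and read off (iv) from $\inm(I')=\inm(I)$, exactly as the paper does.
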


\begin{proof}
Apply Proposition~\ref{prop:reverse-tail} down to $k=2$.  We obtain the sequence
$$
 f_1,f_2',\ldots,f_r',
$$
which is $J$-filter regular and
$$
 c_1=a_J(0:f_1)=a_1,
 \qquad c_2=a_J((f_1):f_2'/(f_1))=a_2,
 $$
 $$
 \qquad c_i =  a_J\!\left(
 \frac{(f_1,f_2'\ldots,f_{i-1}'):f_i'}
 {(f_1,f_2'\ldots,f_{i-1}')}
 \right)\le2^{i-2}a_i\quad(i\ge3).
$$
Consequently
$$
 c_1+\cdots+c_r
 \le a_1+a_2+2a_3+\cdots+2^{r-2}a_r.
$$
Moreover Proposition~\ref{prop:reverse-tail}(iii) gives
\[
 \inm(f_1,f_2',\ldots,f_i')=\inm(f_1,\ldots,f_i).
\]
for $i=1,\ldots,r$.
Thus Lemma~\ref{lem:ARin} implies
$$
 \ar_J(f_1,f_2',\ldots,f_i')=\ar_J(f_1,\ldots,f_i)
$$ for $i=1,\ldots,r$.
Therefore,
$$N \geq \max\{c_1+\cdots+c_r,\ar_J(f_1),\ldots,\ar_J(f_1,\ldots,f_r)\}+1$$
We now apply Lemma~\ref{lem:first-entry}, we have $f_1',f_2',\ldots, f_r'$ is $J$-filter regular sequence with
$$a_J(0:f_1') = a_J(0:f_1)$$
and for $i=2,\ldots, r$
$$ a_J\!\left(
 \frac{(f_1',\ldots,f_{i-1}'):f_i'}
 {(f_1',\ldots,f_{i-1}')}
 \right) \leq 2  a_J\!\left(
 \frac{(f_1,f_2'\ldots,f_{i-1}'):f_i'}
 {(f_1,f_2'\ldots,f_{i-1}')}
 \right)
 \le2\cdot2^{i-2}a_i=2^{i-1}a_i;$$
 and for $i=1,\ldots,r$
 $$\inm(f_1',f_2'\ldots,f_i) = \inm(f_1,\ldots,f_i).$$
This proves (i)--(iii).  Finally (iv) follows from $\inm(I')=\inm(I)$, the identification
\[
 \gr_J(R/I)\cong\gr_J(R)/\inm(I),
\]
and \eqref{eq:AR-initial}.
\end{proof}

\begin{remark}
  When $r=2$, we get $$N= \max\{a_1+a_2, \ar_J(f_1),\ar_J(f_1,f_2)\} +1.$$
\end{remark}

\begin{example}\label{example}

Let $k$ be a field and 
$$
 A=k[[x_1,\ldots,x_r,y]],
 \qquad
 \mathfrak q=(x_1,\ldots,x_r,y).
$$
Regard $k=A/\mathfrak q$ as an $A$-module and form the idealization
\[
 R=A\ltimes k=A\oplus k,
\]
with multiplication
\[
 (a,\lambda)(b,\mu)=(ab,\overline a\mu+\overline b\lambda),
\]
where bars denote residue classes in $k=A/\mathfrak q$.  Then $R$ is a Noetherian local ring with maximal ideal $\mathfrak q\ltimes k$.

Set
\[
 J=((y,0))R
 \qquad\text{and}\qquad
 f_i=(x_i,0),\quad i=1,\ldots,r.
\]
For $I_i=(x_1,\ldots,x_i)A$ and $F_i=(f_1,\ldots,f_i)R$, a direct computation gives
\[
 F_i=I_i\oplus0,
 \qquad
 a_i=a_J\left(\frac{F_{i-1}:f_i}{F_{i-1}}\right)=1,
 \qquad
 \ar_J(F_i)=0
 \qquad(i=1,\ldots,r).
\]
Consequently the Quy-N. V. Trung bound gives
\[
 N_{\QNVT}=2^r,
\]
whereas Theorem \ref{thm:main} gives
\[
 N=2^{r-1}+1.
\]
In particular, for $r=4$ the two sufficient exponents are respectively $16$ and $9$.
\end{example}

\section{Consequences and applications}\label{sec:applications}
Throughout this section $N$ is as in Theorem \ref{thm:main} and $I'$ is any perturbation of order $N$.

\begin{corollary}[Hilbert--Samuel functions]
If $J$ is $\mm$-primary then
\[
 \ell\bigl(R/(I+J^n)\bigr)=\ell\bigl(R/(I'+J^n)\bigr)
 \qquad(n\ge0).
\]
\end{corollary}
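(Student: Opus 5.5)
The plan is to reduce the corollary to the isomorphism of associated graded rings in Theorem~\ref{thm:main}(iv), or more precisely to the equality of initial ideals in Theorem~\ref{thm:main}(iii).

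The key observation is that, for every $n\ge0$, the length $\ell\bigl(R/(I+J^n)\bigr)$ is determined by the graded pieces of $\gr_J(R/I)$. Indeed, the chain
\[
R\supseteq J+I\supseteq J^2+I\supseteq\cdots\supseteq J^n+I
\]
has successive quotients $(J^m+I)/(J^{m+1}+I)$ for $m=0,\ldots,n-1$, which are exactly the homogeneous components of degree $m$ of $\gr_J(R/I)$. Since $J$ is $\mm$-primary, each of these is a finitely generated module over the Artinian ring $R/J$, so all lengths involved are finite. Additivity of length then gives
\[
\ell\bigl(R/(I+J^n)\bigr)=\sum_{m=0}^{n-1}\ell_R\!\left(\frac{J^m+I}{J^{m+1}+I}\right),
\]
and the same formula holds with $I'$ in place of $I$.

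Next, I would identify these components using $\gr_J(R/I)\cong\gr_J(R)/\inm(I)$. The degree $m$ component of the right-hand side is $\bigl(J^m/J^{m+1}\bigr)/\inm(I)_m$. Since Theorem~\ref{thm:main}(iii) gives $\inm(I')=\inm(I)$ as homogeneous ideals of $\gr_J(R)$, the two quotients $\gr_J(R)/\inm(I)$ and $\gr_J(R)/\inm(I')$ are literally equal, degree by degree, as $R/J$-modules. Hence their components of each degree have equal lengths, and summing over $m=0,\ldots,n-1$ proves the claimed equality for every $n\ge0$.

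The only point needing care is that the isomorphism $\gr_J(R/I')\cong\gr_J(R/I)$ should be used as a graded isomorphism of $R/J$-modules, not merely as an abstract ring isomorphism. This is why I plan to argue through the equality of initial ideals rather than through (iv) alone. With that choice there is no real obstacle; the rest is the standard filtration and additivity-of-length argument.
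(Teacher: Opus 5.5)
Your proof is correct and is essentially the paper's argument. The paper just says the equality follows from $\gr_J(R/I)\cong\gr_J(R/I')$; you unpack this by writing $\ell(R/(I+J^n))$ as the sum of the lengths of the graded components of degree $<n$, and by using $\inm(I')=\inm(I)$ from Theorem~\ref{thm:main}(iii) to see that these components coincide as $R/J$-modules, which is the graded identification the paper's one-line proof implicitly relies on.
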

\begin{proof}
This follows from $\gr_J(R/I)\cong\gr_J(R/I')$.
\end{proof}

\begin{corollary}[Achilles--Manaresi function; cf.\ {\cite[Cor. 3.11]{QT}}]\label{cor:AM}
$R/I$ and $R/I'$ have the same Achilles--Manaresi bivariate function with respect
to $J$, hence the same multiplicity sequence $c_0(J),\dots,c_d(J)$. In
particular, when $J$ is the Jacobian ideal, the Segre numbers are unchanged.
\end{corollary}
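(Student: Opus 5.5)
The plan is to reduce Corollary~\ref{cor:AM} to the isomorphism $\gr_J(R/I)\cong\gr_J(R/I')$ from Theorem~\ref{thm:main}(iv), and then follow the argument of \cite[Cor.~3.11]{QT}. The point is that the Achilles--Manaresi bivariate function of a local ring $(T,\mathfrak n)$ with respect to an ideal $\mathfrak a$ depends only on $\gr_{\mathfrak a}(T)$ together with its homogeneous maximal ideal. Recall the definition: it is the length function
\[
(m,n)\mapsto \ell\Bigl(\mathfrak m^m\gr_J(R/I)_n\big/\mathfrak m^{m+1}\gr_J(R/I)_n\Bigr),
\]
suitably summed. Equivalently, it is the Hilbert function of the bigraded ring $\gr_{\mathfrak m}(\gr_J(R/I))$, where $\mathfrak m$ acts through the degree-zero component $R/(J+I)$.

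The key steps, in order, are as follows.
\begin{enumerate}
\item Fix $I'$ a perturbation of order $N$. By Theorem~\ref{thm:main}(iii) we have $\inm(I')=\inm(I)$ as ideals of $\gr_J(R)$. This gives an equality, not merely an abstract isomorphism:
\[
\gr_J(R/I)=\gr_J(R)/\inm(I)=\gr_J(R)/\inm(I')=\gr_J(R/I')
\]
as quotients of the same graded ring $\gr_J(R)$.
\item Since the equality holds inside $\gr_J(R)$, it is compatible with the $\gr_J(R)$-module structure. In particular it respects the action of $\mathfrak m$, via the maximal ideal of $\gr_J(R)_0=R/J$, together with the bigrading. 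Hence the $\mathfrak m$-adic filtrations on each graded piece agree, and so the bigraded Hilbert functions agree.
\item Conclude that the Achilles--Manaresi functions coincide. The multiplicity sequence $c_0(J),\dots,c_d(J)$ is read off from the leading coefficients of that function, so it coincides as well.
\item Segre numbers, for $J$ the Jacobian ideal, are by definition the multiplicity sequence of $J$. So they are unchanged, provided the Jacobian ideal used is the same $J$ for both rings, which holds since $J$ is fixed in $R$.
\end{enumerate}

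The main obstacle is step~2. I need to make sure the Achilles--Manaresi function is an invariant of $\gr_J(R/I)$ as an $R$-algebra, and not just of its isomorphism class as an abstract graded ring. An abstract graded isomorphism might not preserve the $\mathfrak m$-action on the degree-zero part. To avoid this, I would use the equality of initial ideals in (iii) rather than the isomorphism in (iv). One subtlety remains: the degree-zero parts are $R/(J+I)$ and $R/(J+I')$, and these agree because $I+J=I'+J$ as soon as $N\ge1$. Everything else is formal, mirroring \cite[Cor.~3.11]{QT} with our exponent $N$ replacing $N_{\QNVT}$.
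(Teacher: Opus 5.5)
Your proposal is correct and follows the paper's argument; the paper's entire proof is the observation $\gr_\mm(\gr_J(R/I))\cong\gr_\mm(\gr_J(R/I'))$. Your use of the equality $\inm(I')=\inm(I)$ from Theorem~\ref{thm:main}(iii) makes $\gr_J(R/I)$ and $\gr_J(R/I')$ literally the same quotient of $\gr_J(R)$, so the $\mm$-adic filtrations coincide, and this is exactly the justification that the one-line proof leaves implicit.
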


\begin{proof}
$\gr_\mm\big(\gr_J(R/I)\big)\cong\gr_\mm\big(\gr_J(R/I')\big)$.
\end{proof}

\begin{corollary}[Rees algebra; cf.\ {\cite[Cor. 3.12]{QT}}]\label{cor:rees}
The following invariants and properties of $\Rees_J(R/I)$ are shared by
$\Rees_J(R/I')$: the relation type $\rt$, the Castelnuovo--Mumford regularity
$\reg$, Cohen--Macaulayness, and Gorensteinness. Moreover $\ar_J(I')=\ar_J(I)$.
\end{corollary}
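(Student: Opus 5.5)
The plan is to deduce everything from the equality $\gr_J(R/I')\cong\gr_J(R/I)$, which holds by Theorem~\ref{thm:main}(iv). More precisely, it comes from Theorem~\ref{thm:main}(iii): $\inm(I')=\inm(I)$ as ideals of $\gr_J(R)$, so we have an honest equality $\gr_J(R)/\inm(I)=\gr_J(R)/\inm(I')$, not just an abstract isomorphism. We then transfer this to the Rees algebras. Write $\overline J=(J+I)/I$ and $\overline J'=(J+I')/I'$. We use the standard presentation $\Rees_J(R/I)=\bigoplus_n \overline J^{\,n}t^n$ together with the natural surjection
$$\Rees_J(R)\to\Rees_J(R/I),$$
whose kernel in degree $n$ is $J^n\cap I$. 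Its associated graded ring with respect to the ideal $J\Rees$ (equivalently, the fibre $\Rees/J\Rees$ modulo the $t$-shift) recovers $\gr_J(R/I)$.

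The Artin--Rees statement is immediate. Theorem~\ref{thm:main}(iv) gives $\ar_J(I')=\ar_J(I)$, and alternatively it follows from \eqref{eq:AR-initial}, since $d(\inm(I))=d(\inm(I'))$.

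For the relation type and the regularity, I would proceed as in \cite[Cor.~3.12]{QT}. Both invariants of $\Rees_J(R/I)$ can be read off from $\gr_J(R/I)$. For regularity, the exact sequences
$$0\to \Rees_+(1)\to\Rees\to R/I\to0, \qquad 0\to J\Rees\to \Rees\to \gr\to0$$
relate $\reg\Rees$ and $\reg\gr$, and a standard result gives $\reg\Rees_J(R/I)=\reg\gr_J(R/I)$. For relation type, $\rt\Rees_J(R/I)=\rt\gr_J(R/I)$ because a presentation of the Rees algebra specializes to one of $\gr$, and conversely relations of $\gr$ lift by Nakayama-type degree arguments. Since the two associated graded rings are isomorphic as graded algebras over $\gr_J(R)$, these invariants coincide.

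Cohen--Macaulayness and Gorensteinness are the main obstacle, since they are not purely functions of $\gr$. Here I would invoke the known criteria, namely Trung--Ikeda for Cohen--Macaulayness and Ikeda/Goto--Nishida for the Gorenstein property. The Cohen--Macaulay criterion is expressed through the local cohomology of $\gr_J(R/I)$ (vanishing of $a$-invariants in appropriate degrees, together with CM-ness of $\gr$ when $\dim R/I>0$) and the dimension $\dim R/I=\dim\gr_J(R/I)$. For Gorensteinness, the criterion is that $\gr$ is Gorenstein with $a(\gr)=-2$. All of this data is shared by $R/I$ and $R/I'$ via the isomorphism of associated graded rings, so the properties transfer. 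The delicate point is ensuring that these criteria need no extra hypothesis on the base ring $R/I$ beyond what the graded ring detects, such as $\operatorname{ht}\overline J>0$. If such a hypothesis appears, I would check it by noting that the height of $\overline J$ is determined by $\dim\gr_J(R/I)$ versus $\dim\gr_J(R/I)/\overline J\gr$, which are again equal for $I$ and $I'$.
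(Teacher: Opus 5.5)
Your route is the paper's. The paper gives no separate argument: it uses only Theorem~\ref{thm:main}(iii)--(iv), which give $\gr_J(R/I')=\gr_J(R)/\inm(I)=\gr_J(R/I)$ and $\ar_J(I')=\ar_J(I)$, and refers to \cite[Cor.~3.12]{QT}, i.e.\ to the fact that these invariants of the Rees algebra are governed by the associated graded ring. Your treatment of $\ar$, $\rt$ and $\reg$ is exactly that, with two small slips. The sequence ending in $R/I$ is $0\to\Rees_+\to\Rees\to R/I\to0$ with no shift; the shift belongs to $0\to\Rees_+(1)\cong\Jbar\Rees\to\Rees\to\gr\to0$. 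Also, $\Rees/\Jbar\Rees$ simply \emph{is} $\gr_J(R/I)$; no fibre cone or further associated graded ring is involved.

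The Cohen--Macaulay/Gorenstein paragraph does not hold up as written, for two reasons.

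First, your check that the height hypothesis is shared is wrong. $\Jbar$ acts as zero on $\gr$, so $\dim\gr/\Jbar\gr=\dim\gr$ and you are comparing $\dim\gr$ with itself. Reading it instead as $\dim\gr-\dim\gr/\gr_+=\dim R/I-\dim R/(I+J)$ only bounds $\operatorname{ht}\Jbar$ from above, and the bound can be strict when $R/I$ is not equidimensional. The correct statement is $\operatorname{ht}\Jbar=\operatorname{ht}\gr_+$. Indeed, the primes of $\gr$ containing $\gr_+$ are $q\gr+\gr_+$ with $q\supseteq I+J$. Localizing at $R\setminus q$ shows this prime has height $\dim\gr_J((R/I)_q)=\dim(R/I)_q$.

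Second, and more seriously, showing the hypothesis is shared only handles the case where it holds.
\begin{itemize}
\item Trung--Ikeda needs $\operatorname{ht}\Jbar>0$.
\item ``$\gr$ Gorenstein with $a(\gr)=-2$'' needs $\operatorname{ht}\Jbar\ge2$. For $I=(x)$ with $x$ a nonzerodivisor in a Gorenstein ring, the Rees algebra is a polynomial ring, yet $a(\gr)=-1$.
\item In height $0$, Cohen--Macaulayness of the Rees algebra is not even a function of $\gr$. Put $S=k[[x,y]]/(y^2,xy)$ and compare $A=k[[x,y]]/(y^2)$, $I=(xy)$ with $B=S\ltimes S/yS$, $K=0\ltimes S/yS$. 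Both have $\gr\cong S[Z]/(Z^2,yZ)$, dimension $1$ and height $0$. Yet $\Rees_I(A)=A\oplus It$ is Cohen--Macaulay, while $\Rees_K(B)$ has depth $0$ because $(y,0)$ is a socle element.
\end{itemize}
So your argument proves the Cohen--Macaulay (resp.\ Gorenstein) transfer only when $\operatorname{ht}\Jbar>0$ (resp.\ $\ge2$). The low-height cases need an added hypothesis or input beyond the $\gr$-isomorphism. The paper does not discuss this point either; it only cites \cite[Cor.~3.12]{QT}. Finally, the Trung--Ikeda criterion does not require $\gr$ to be Cohen--Macaulay: it allows $[H^i_{\mathfrak M}(\gr)]_{-1}\neq0$ for $i<\dim R/I$.
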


\begin{remark}[Perturbation index and extended degree]\label{rem:extdeg}
In \cite[Thm. 4.5]{QT} and \cite[Cor. 4.8]{QT} the perturbation index of
$\gr_J(R/I)$ is bounded in terms of an extended degree $D=D(J,R/I)$. The
numerical constraint used there,
$\sum_{i=1}^{r}2^{\,i-1}a_i\le(2^{r}-1)D\le N$, may now be replaced by
$a_1+a_2+2a_3+\cdots+2^{r-2}a_r \le2^{\,r-1}D\le N$. All the statements of \cite[Section 4]{QT} therefore hold
with this smaller first constraint; we do not restate the resulting formulas,
since in \cite[Thm. 4.5]{QT} the Artin--Rees term dominates.
\end{remark}

\subsection{Homological invariants}
 If $f$ is $J$-filter regular in $R$ and
$c\ge\max\{a_J(0:f),\ \ar_J(f)+1\}+1$, then for $\varepsilon\in J^{c}$ and
$f'=f+\varepsilon$ one has $$\beta^R_j(R/(f))=\beta^R_j(R/(f'))$$ and
$$\mu^j_R(R/(f))=\mu^j_R(R/(f'))$$ for all $j$ \cite[Prop. 3.4]{VDT}.

The our result also combines with V. D. Trung's one-element stability theorem for Bass and Betti numbers.  We use the result \cite{VDT} and combine with our reverse-order bound, it shows that if $\varepsilon_i\in J^{N^\sharp}$ for all $i$, where
$$
N^\sharp=\max\{a_1+a_2+2a_3+\cdots+2^{r-2}a_r,,
\operatorname{ar}_J(f_1),\ldots,\operatorname{ar}_J(f_1,\ldots,f_r)\}+2,
$$
then all Bass and Betti numbers of $R/I$ and $R/I'$ agree.

\begin{theorem}\label{thm:homological-app}

If
\[
 f_i'=f_i+\varepsilon_i,
 \qquad \varepsilon_i\in J^{N^\sharp},
\]
then, for every $j\ge0$,
\[
 \beta_j^R(R/I)=\beta_j^R(R/I'),
 \qquad
 \mu_R^j(R/I)=\mu_R^j(R/I').
\]
\end{theorem}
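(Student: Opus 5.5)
The plan is to reduce the $r$-element statement to the one-element stability result \cite[Prop. 3.4]{VDT} quoted above, one coordinate at a time. The route goes through the intermediate sequences produced by the reverse-order argument of Theorem~\ref{thm:main}. The natural chain is
\[
 I=I^{(r+1)}\to I^{(r)}\to\cdots\to I^{(2)}\to I^{(1)}=I',
 \qquad I^{(k)}=(f_1,\ldots,f_{k-1},f_k',\ldots,f_r'),
\]
and the goal is to show that consecutive ideals have the same Betti and Bass numbers over $R$.

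At step $k$ the two ideals $I^{(k+1)}$ and $I^{(k)}$ differ only in the $k$-th generator. By the proof of Proposition~\ref{prop:reverse-tail} (for $k\ge 2$) and Lemma~\ref{lem:first-entry} (for $k=1$), this step is obtained by working in $S=R/(f_1,\ldots,f_{k-1})$, or $R$ if $k=1$. There the element $f_k$ is $JS$-filter regular with $a_J(0_S:f_k)=a_k\le N^\sharp-2$, and by Lemma~\ref{lem:ARquot} we have $\ar_{JS}(f_kS)\le\ar_J(f_1,\ldots,f_k)\le N^\sharp-2$. So the hypothesis $c\ge\max\{a_J(0:f),\ar_J(f)+1\}+1$ of \cite[Prop. 3.4]{VDT} holds for $f_k$ with $c=N^\sharp$, and that proposition gives equality of the Betti and Bass numbers of $S/(f_k)$ and $S/(f_k')$ over $S$.

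The remaining work is to transport this to the remaining generators and back to $R$. For the first, I would argue that, since $\inm$ is preserved at every stage (Proposition~\ref{prop:reverse-tail}(ii)) and the tail $f_{k+1}',\ldots,f_r'$ stays filter regular modulo both $(f_k)$ and $(f_k')$ with identical Loewy-length and Artin--Rees data, the quotient $R/I^{(k+1)}$ is obtained from $S/(f_k)$ by the same iterated construction that produces $R/I^{(k)}$ from $S/(f_k')$. For the second, I would proceed inductively along $f_{k+1}',\ldots,f_r'$, using long exact sequences from $0\to (K:g)/K\to R/K\xrightarrow{g}R/K\to R/(K,g)\to0$. Here the colon modules $(K:g)/K$ are $J$-torsion of bounded Loewy length, and they should coincide for the two chains because the colon ideals are equal (Lemma~\ref{lem:one} gives $0:f'=0:f$ in the appropriate quotient).

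The main obstacle is this transport step. \cite[Prop. 3.4]{VDT} is a statement about $R/(f)$ versus $R/(f')$ over the base ring, not about $R/(K,f)$ over $R$. The cleanest route I would try first is to invoke V.~D.~Trung's result in its general form for an ideal $K$ with $f$ filter regular on $R/K$, if \cite{VDT} provides it, which many such stability theorems do. Failing that, I would try to show that $R/I$ and $R/I'$ are related by a filtered isomorphism whose associated graded is the identity, using $\inm(I)=\inm(I')$ together with $N^\sharp=N+1$. The goal would be an isomorphism $R/I\cong R/I'$ of $R$-modules, via an automorphism of $R$ close to the identity $J$-adically, which immediately gives equality of all $\beta_j$ and $\mu^j$. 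The extra $+1$ in $N^\sharp$ compared with $N$ is exactly the slack that \cite{VDT} requires in its one-element step, so I expect the chain argument to close precisely with that margin.
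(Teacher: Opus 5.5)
There is a genuine gap. Your chain $I^{(r+1)}=I\to\cdots\to I^{(1)}=I'$ is the same as the paper's. Using \cite[Prop. 3.4]{VDT} once per generator, with the extra $+1$ in $N^\sharp$ as slack, is also the right plan. The problem is where you apply it.

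You place $f_k$ in $S=R/(f_1,\ldots,f_{k-1})$ and check $a_J(0_S:f_k)=a_k$ and $\ar_{JS}(f_kS)\le\ar_J(f_1,\ldots,f_k)$. These are invariants of the wrong module. They say nothing about $R/I^{(k+1)}$ versus $R/I^{(k)}$, which also contain the already-perturbed tail $f'_{k+1},\ldots,f'_r$. Neither of your transport mechanisms closes this:
\begin{itemize}
\item Long exact Tor/Ext sequences from $0\to(K:g)/K\to R/K\xrightarrow{g}R/K\to R/(K,g)\to0$ give no equality of Betti or Bass numbers without control of the connecting maps.
\item Lemma~\ref{lem:one} only gives $0:f=0:f'$ inside one fixed ring. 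It says nothing comparing $K:g$ with $K':g$ for different ideals $K,K'$.
\item The automorphism route is unsupported. Nothing produces an automorphism of an arbitrary Noetherian local ring carrying $I$ to $I'$. Moreover, an honest $R$-module isomorphism $R/I\cong R/I'$ would force $I=\operatorname{Ann}(R/I)=\operatorname{Ann}(R/I')=I'$.
\end{itemize}

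The missing idea is to quotient by all the other generators at once. Put $K_k=(f_1,\ldots,f_{k-1},f'_{k+1},\ldots,f'_r)$ and $M_k=R/K_k$. Then $R/I^{(k+1)}=M_k/f_kM_k$ and $R/I^{(k)}=M_k/f'_kM_k$, and you apply \cite[Prop. 3.4]{VDT} to the $R$-module $M_k$. Both transport problems disappear, and the Betti and Bass numbers are over $R$ from the start.

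The price is that the relevant Loewy length becomes
\[
a_J(0_{M_k}:f_k)=a_J\bigl(((f'_{k+1},\ldots,f'_r)S:f_k)/(f'_{k+1},\ldots,f'_r)S\bigr).
\]
This is the colon of the \emph{first} element of $f_k,f'_{k+1},\ldots,f'_r$ in $S$ modulo the later ones, and it is not $a_k$. The paper bounds it as follows:
\begin{itemize}
\item Proposition~\ref{prop:reverse-tail} says the Loewy lengths of that sequence are at most $a_k,a_{k+1},2a_{k+2},\ldots,2^{r-k-1}a_r$.
\item The switch-lemma estimate \eqref{eq:reverse-colon-bound} from Lemma~\ref{lem:first-entry} then gives $a_J(0_{M_k}:f_k)\le a_k+a_{k+1}+2a_{k+2}+\cdots+2^{r-k-1}a_r$.
\item That sum is at most $a_1+a_2+2a_3+\cdots+2^{r-2}a_r$.
\end{itemize}
The Artin--Rees term is controlled using three facts: $K_k\subseteq I^{(k+1)}$, Lemma~\ref{lem:ARquot}, and $\ar_J(I^{(k+1)})=\ar_J(I)$, which follows from $\inm(I^{(k+1)})=\inm(I)$ and Lemma~\ref{lem:ARin}.

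Without these two estimates, your proposal never verifies the hypotheses of \cite[Prop. 3.4]{VDT} for the module that actually matters.
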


\begin{proof}
  For $k=0,\ldots,r$, set
\[
 I_k=(f_1,\ldots,f_k,f'_{k+1},\ldots,f'_r),
\]
so that $I_r=I$ and $I_0=I'$.  For $k=r,\ldots,1$, put
\[
 K_k=(f_1,\ldots,f_{k-1},f'_{k+1},\ldots,f'_r)
 \quad\text{and}\quad
 M_k=R/K_k.
\]
Then
\[
 R/I_k=M_k/f_kM_k,
 \qquad
 R/I_{k-1}=M_k/f'_kM_k.
\]
Let
\[
S_k=R/(f_1,\ldots,f_{k-1})
\]
and consider in $S_k$ the $J$-filter regular sequence
\[
f_k,f'_{k+1},\ldots,f'_r.
\]
By Proposition \ref{prop:reverse-tail}, its successive $J$-Loewy lengths are bounded by
\[
a_k,\quad a_{k+1},\quad
2a_{k+2},\quad \ldots,\quad
2^{r-k-1}a_r.
\]
Set
\[
L_k=(f'_{k+1},\ldots,f'_r)S_k.
\]
Since
\[
M_k=S_k/L_k,
\]
we have
\[
0:_{M_k}f_k
\cong
\frac{L_k:_{S_k}f_k}{L_k}.
\]
Applying estimate (6) of Lemma \ref{lem:first-entry} to the sequence
$f_k,f'_{k+1},\ldots,f'_r$ in $S_k$, we obtain
\[
a_J(0_{M_k}:f_k)
=
a_J\left(\frac{L_k:_{S_k}f_k}{L_k}\right)
\le
a_k+a_{k+1}+2a_{k+2}
+\cdots+2^{r-k-1}a_r.
\]
Hence
\[
a_J(0_{M_k}:f_k)\le a_1+a_2+2a_3+\cdots+2^{r-2}a_r.
\]
Moreover, Proposition \ref{prop:reverse-tail} gives
\[
 \operatorname{in}(I_k)=\operatorname{in}(I),
\]
and hence, by Lemma \ref{lem:ARin},
\[
 \operatorname{ar}_J(I_k)=\operatorname{ar}_J(I) = \ar_J(f_1,\ldots,f_r).
\]
Since $K_k\subseteq I_k$, Lemma \ref{lem:ARquot} yields
\[
 \operatorname{ar}_{JM_k}(f_kM_k)\le \operatorname{ar}_J(I_k)= \ar_J(f_1,\ldots,f_r).
\]
Therefore
\[
 \max\left\{
 a_J(0_{M_k}:f_k),
 \operatorname{ar}_{JM_k}(f_kM_k)+1
 \right\}+1
 \le N^\sharp.
\]

Since $\varepsilon_k\in J^{N^\sharp}$, \cite[Proposition 3.4]{VDT}
applied to the module $M_k$ and the element $f_k$ gives, for every
$j\ge0$,
\[
 \beta_j^R(R/I_k)=\beta_j^R(R/I_{k-1})
\]
and
\[
 \mu_R^j(R/I_k)=\mu_R^j(R/I_{k-1}).
\]
Chaining these equalities for $k=r,r-1,\ldots,1$ gives
\[
 \beta_j^R(R/I)=\beta_j^R(R/I'),
 \qquad
 \mu_R^j(R/I)=\mu_R^j(R/I')
\]
for every $j\ge0$.
\end{proof}

\begin{corollary}\label{cor:homological-dimensions}
Under the hypotheses of Theorem \ref{thm:homological-app},
\[
 \operatorname{pd}_R(R/I)=\operatorname{pd}_R(R/I'),
 \qquad
 \operatorname{id}_R(R/I)=\operatorname{id}_R(R/I'),
\]
and
\[
 \operatorname{depth}_R(R/I)=\operatorname{depth}_R(R/I').
\]
Moreover the zeroth Bass numbers, equivalently the socle dimensions, agree:
\[
 \dim_k\operatorname{Soc}(R/I)
 =\dim_k\operatorname{Soc}(R/I').
\]
\end{corollary}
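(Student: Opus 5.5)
The plan is to deduce all four equalities from Theorem~\ref{thm:homological-app} by means of the standard characterizations of these invariants through Betti and Bass numbers. Write $k=R/\mm$ for the residue field. By Theorem~\ref{thm:homological-app}, for every $j\ge0$ we have $\beta_j^R(R/I)=\beta_j^R(R/I')$ and $\mu_R^j(R/I)=\mu_R^j(R/I')$. Both modules are finitely generated over the Noetherian local ring $R$.

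\emph{Projective dimension.} Since $\beta_j^R(M)=\dim_k\operatorname{Tor}_j^R(k,M)$ is the $j$-th rank in a minimal free resolution, we have $\operatorname{pd}_R M=\sup\{j\mid \beta_j^R(M)\neq0\}$, with the value $\infty$ allowed. Equal Betti sequences therefore give equal projective dimensions.

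\emph{Injective dimension.} Here $\mu_R^j(M)=\dim_k\operatorname{Ext}_R^j(k,M)$ counts the copies of $E(k)$ in the minimal injective resolution. For finitely generated $M$, $\operatorname{id}_R M=\sup\{j\mid \mu_R^j(M)\ne0\}$. This is the standard fact (Bass) that $\operatorname{Ext}^j(k,M)=0$ for $j>n$ forces $\operatorname{id}M\le n$. Hence the injective dimensions agree.

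\emph{Depth.} We use $\operatorname{depth}_R M=\inf\{j\mid \operatorname{Ext}_R^j(k,M)\neq0\}=\inf\{j\mid\mu_R^j(M)\ne0\}$, which is Rees's characterization of depth via Ext. So the depths coincide as well.

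\emph{Socle.} We have $\operatorname{Soc}(M)=0:_M\mm\cong\operatorname{Hom}_R(k,M)$, so $\dim_k\operatorname{Soc}(M)=\mu_R^0(M)$. The case $j=0$ of Theorem~\ref{thm:homological-app} gives the last equality.

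The argument is essentially immediate. The only points needing care are that these characterizations require finitely generated modules over a local ring, which holds here, and that the suprema and infima may be infinite (for instance $\operatorname{id}=\infty$). In that case the equality still holds, because it is read off from identical sequences.
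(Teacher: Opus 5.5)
Your proposal is correct and is the same argument as the paper's: projective dimension, injective dimension and depth are read off the vanishing patterns of the Betti and Bass numbers supplied by Theorem~\ref{thm:homological-app}, and the socle dimension is identified with $\mu_R^0$. You add the standard justifications the paper leaves implicit (minimal free resolutions, Bass's characterization of injective dimension, Rees's Ext-characterization of depth), and your handling of infinite values is correct.
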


\begin{proof}
Projective dimension, injective dimension and depth are read from the
vanishing patterns of the Betti and Bass numbers.  The last equality is the
case $j=0$ of the Bass-number statement.
\end{proof}

\begin{corollary}[Index of reducibility in the finite-length case]\label{cor:index-red}
Assume in addition that $R/I$ (equivalently, under the perturbation above,
$R/I'$) has finite length.  Then
\[
 \operatorname{ir}(I)=\operatorname{ir}(I').
\]
\end{corollary}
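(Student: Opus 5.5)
The plan is to use the finite-length interpretation of the index of reducibility: when $R/I$ has finite length, $\operatorname{ir}(I)$, the number of irreducible components in an irredundant decomposition of $I$, equals $\dim_k\operatorname{Soc}(R/I)=\dim_k\operatorname{Hom}_R(k,R/I)=\mu_R^0(R/I)$. The socle equality in Corollary~\ref{cor:homological-dimensions} then gives the result, once we check that $R/I'$ also has finite length.

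First we show that $R/I'$ has finite length whenever $R/I$ does. For a finitely generated $R$-module $M$, finite length is equivalent to $\operatorname{depth}_R M=0$ together with $\dim M=0$. Depth is preserved by Corollary~\ref{cor:homological-dimensions}, but dimension is not directly controlled by the Bass and Betti numbers. We therefore argue as follows. By Bass's theorem, $\operatorname{id}_R(M)<\infty$ forces $\operatorname{id}_R M=\operatorname{depth} R$, which does not detect dimension, so that route is not useful. A more direct route is to note that the Bass numbers $\mu^j_R(M)=\dim_k\operatorname{Ext}^j_R(k,M)$ are finite for all $j$, which gives nothing new. Instead we use $I'\subseteq I+J^N$ and $I\subseteq I'+J^N$. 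When $J$ is $\mm$-primary, the Hilbert--Samuel corollary gives $\ell(R/(I'+J^n))=\ell(R/(I+J^n))$ for all $n$. If $R/I$ has finite length, this function is eventually constant, equal to $\ell(R/I)$, so $\ell(R/(I'+J^n))$ is eventually constant. By Krull's intersection theorem this forces $\dim R/I'=0$ and $\ell(R/I')=\ell(R/I)$. For general $J$ we treat ``equivalently'' in the statement as part of the hypothesis, or reduce to the $\mm$-primary case; this is the step I expect to be the main obstacle. Alternatively, for Artinian $R/I$ we have $\mm^s\subseteq I$ for some $s$, and $I'+\mm^N\supseteq I'+J^N\supseteq I$. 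For $N\ge s$ we could hope that $\mm^s\subseteq I'+\mm^{s+1}$ and then apply Nakayama, but this requires $J^N\subseteq\mm^{s+1}$, which fails for non-$\mm$-primary $J$. So we fall back on the parenthetical hypothesis as stated.

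Granting that both quotients have finite length, the remaining step is standard. For an $\mm$-primary ideal $\mathfrak a$, $R/\mathfrak a$ embeds as an essential extension of its socle into $E(k)^{t}$ with $t=\dim_k\operatorname{Soc}(R/\mathfrak a)$. An irredundant irreducible decomposition of $\mathfrak a$ has exactly $t$ components, by Noether's theorem identifying $\operatorname{ir}(\mathfrak a)$ with the socle dimension. Applying this to $\mathfrak a=I$ and $\mathfrak a=I'$, and using $\mu^0_R(R/I)=\mu^0_R(R/I')$ from Theorem~\ref{thm:homological-app} with $j=0$, we obtain $\operatorname{ir}(I)=\dim_k\operatorname{Soc}(R/I)=\dim_k\operatorname{Soc}(R/I')=\operatorname{ir}(I')$.
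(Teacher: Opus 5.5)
Your argument is correct and is the paper's: for $\mm$-primary ideals $\operatorname{ir}$ equals $\dim_k\operatorname{Soc}$, and the $j=0$ Bass-number equality of Theorem~\ref{thm:homological-app} finishes the proof, with the finite length of $R/I'$ taken from the parenthetical hypothesis exactly as the paper does. As a side remark, that finite length is automatic for any proper $J$, and you are mistaken that $J^N\subseteq\mm^{s+1}$ fails for non-$\mm$-primary $J$ (since $J^N\subseteq\mm^N$ always, the only obstruction on that route is that $N$ need not exceed $s$); instead, if $J^s\subseteq\mm^s\subseteq I$, then $\inm(I')=\inm(I)$ forces $\gr_J(R/I')_n=0$ for $n\ge s$, so Nakayama gives $J^s\subseteq I'$ and $\ell(R/I')=\ell(R/I)$.
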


\begin{proof}
For an $\mm$-primary ideal $Q$ in a Noetherian local ring,
\[
 \operatorname{ir}(Q)=\dim_k\operatorname{Soc}(R/Q);
\]
see, for example, \cite[Remark 2.2]{CQT}.  Apply
Corollary \ref{cor:homological-dimensions}.
\end{proof}

\section*{Acknowledgements}
This work is supported by Vietnam National Program for the Development of Mathematics 2021-2030 under grant number B2027-CTT-02.
The author would like to thank Prof. Pham Hung Quy for suggesting questions concerning explicit perturbation bounds.

\end{document}